\DeclareMathAlphabet{\mathpzc}{OT1}{pzc}{m}{it}
\newcommand{\real}{\mathbb{R}} 
\newcommand{\complex}{\mathbb{C}} 
\newcommand{\sffM}{II} 
\newcommand{\scalar}{R} 
\newcommand{\Index}{\textup{Index}} 
\newcommand{\connM}{\overline{\nabla}} 
\newcommand{\connS}{\nabla^{\Sigma}} 
\newcommand{\tr}{\textup{tr}} 
\newcommand{\divergent}{\textup{div}} 
\newcommand{\dimension}{\textup{dim}} 
\newcommand{\harmonicvf}{\mathcal{H}^1} 
\newcommand{\harmonicvfb}{\mathcal{H}^1_T} 
\newtheorem{theorem}{Theorem}[section]
\newtheorem{lemma}[theorem]{Lemma}
\newtheorem{remark}[theorem]{Remark}
\newtheorem{proposition}[theorem]{Proposition}
\newtheorem{corollary}[theorem]{Corollary}
\newtheorem{definition}[theorem]{Definition}
\newtheorem*{theorem*}{Theorem}
\newenvironment{maintheorem}[1]
  {\innercustomthm}
  {\endinnercustomthm}
\newtheorem*{corollary*}{Corollary}
\newenvironment{maincorollary}[1]
  {\innercustomcor}
  {\endinnercustomthm}
\def\bpf{\begin{proof}}
\def\epf{\end{proof}}
\def\be{\begin{equation}}
\def\ee{\end{equation}}
\def\bea{\begin{eqnarray}}
\def\eea{\end{eqnarray}}
\def\bt{\begin{theorem}}
\def\et{\end{theorem}}
\def\bl{\begin{lemma}}
\def\el{\end{lemma}}
\def\br{\begin{remark}}
\def\er{\end{remark}}
\def\bc{\begin{corollary}}
\def\ec{\end{corollary}}
\def\bd{\begin{definition}}
\def\ed{\end{definition}}
\def\bp{\begin{proposition}}
\def\ep{\end{proposition}}
\@date \else {\vskip3ex \centering\footnotesize\@date\par\vskip1ex}\fi
\else \@footnotetext{\@setdate}\fi}
\title[Index estimates for CMC surfaces in $3$-manifolds]{Index estimates for surfaces with constant mean curvature in $3$-dimensional manifolds}
\author{Nicolau S. Aiex and Han Hong}
\date{\today}
\address{University of British Columbia, Department of Mathematics, Vancouver BC V6T 1Z2, Canada}
\email{nsarquis@math.ubc.ca}
\address{University of British Columbia, Department of Mathematics, Vancouver BC V6T 1Z2, Canada}
\email{honghan@math.ubc.ca}
\begin{document}

\begin{abstract}
We prove index estimates for closed and free boundary CMC surfaces in certain $3$-dimensional submanifolds of some Euclidean space.
When the mean curvature is large enough we are able to prove that the index of a CMC surface in an arbitrary $3$-manifold is bounded below by a linear function of its genus.
\end{abstract}

\maketitle

\section{Introduction}

A closed hypersurface of constant mean curvature (CMC) may be variationally characterized a critical point of the area functional under variations that preserve enclosed volume.
In a similar way, a free boundary constant mean curvature (free boundary CMC) hypersurface is an extremal solution of the same problem where, in addition, the boundary is restricted inside a closed hypersurface.
If such a hypersurface minimizes area for small perturbations then it is stable for the corresponding problem.
For example, solutions to the isoperimetric problem, that is, the hypersurface with or without boundary that has least area for a fixed enclosed volume, are in particular stable CMC hypersurfaces.

In \cite{barbosa-docarmo1984, barbosa-docarmo-eschenburg1988} Barbosa-do Carmo and Barbosa-do Carmo-Eschenburg classify stable closed CMC hypersurfaces of Euclidean spaces, spheres and hyperbolic spaces.
A similar result was obtained by Souam \cite{souam1997} for stable free boundary CMC hypersurfaces in a hemisphere and more recently for closed CMC surfaces in $S^2\times \real$ and $\mathbb{H}^2\times\real$.
Other classification results for stable free boundary CMC surfaces were obtained by Ros-Vergasta \cite{ros-vergasta1995} and later improved by Nunes \cite{nunes2017}.
It is also natural to study CMC hypersurfaces of higher index.
That is, those that have some small pertubations that decrease area with fixed enclosed volume.

In \cite{torralbo-urbano2012} Torralbo-Urbano make use of isometric embeddings of homogenous $3$-manifolds into Euclidean space to study stable closed CMC surfaces and the isoperimetric problem in Berger spheres.
The authors also use coordinates of hamornic vector fields to construct test functions for the second variation of area.

In the case of minimal surfaces there has been multiple results establishing a connection between the topology of the surface and its index.
For example, do Carmo-Peng \cite{docarmo-peng1982}, Fischer-Colbrie-Schoen \cite{fischer-colbrie-schoen1980} and Pogorelov \cite{pogorelov1981} have independently proved that stable two-sided minimal surfaces in $\real^3$ are planes.

In \cite{ros2006} Ros proves that the index of a minimal surface in $\real^3$ is bounded below by a linear function of its genus, which was later improved by Chodosh-Maximo \cite{chodosh-maximo2016,chodosh-maximo2018:arxiv}.
A corresponding result was shown by Savo \cite{savo2010} for minimal hypersurfaces in $n$-spheres.
In both situations the authors use harmonic vector fields to construct test functions but their construction is fundamentally distinct.
Ros uses the coordinates of the harmonic vector field with respect to the usual basis of $\real^3$ which works seamlessly since the ambient space is flat.
On the other hand, Savo uses the coordinate with respect to a vector field parametrized by two unit vectors and later takes the average of the second variation for all such test functions.
This allows to overcome the extra term in the second variation given by the curvature of the ambient space.

Using a similar method to Savo's, Sargent \cite{sargent2017} proved a corresponding index estimate for free boundary minimal hypersurfaces in convex bodies of Euclidean space.
In an outstanding work, Ambrozio-Carlotto-Sharp \cite{ambrozio-carlotto-sharp2018.1} used both Ros' and Savo's approach to relate the index of closed minimal hypersurface and its first betti number in an arbitrary ambient manifold that can be suitably embedded into some Euclidean space.
The authors later did the same for free boundary minimal hypersurfaces in $2$-convex domains \cite{ambrozio-carlotto-sharp2018.2}.

Our main results in this paper follow the natural generalization of Torralbo-Urbano for higher index CMC surfaces, similar to Ambrozio-Carlotto-Sharp's approach.
The main difference is that we are only allowed to use admissible functions.
Fortunately, the coordinates of harmonic vector fields are admissible in the closed case.
In the free boundary case we need the extra condition that these vector fields are tangential along the boundary.
In either case we have to use both the harmonic vector field and its Hodge dual, so it only makes sense in the case of CMC surfaces.
It is not clear whether or not Savo's test functions preserve enclosed volume, so the generalization to higher dimensions seem to not be straightforward.

A surprising difference between the CMC case and the minimal case, at least in the case of surfaces, is that the extra term involving the non-zero mean curvature allows us to have a wider variety of applications.
More specifically, some of the ambient spaces that satisfy the conditions of our theorems for non-zero mean curvature do not work in the case of minimal surfaces, for example, flat ambient spaces.

Our main theorem for closed CMC surfaces is:
\begin{maintheorem}{\ref{main theorem closed}}
Let $(M,g)$ be a $3$-dimensional Riemannian manifold without boundary isometrically embedded in $\real^d$ and $\Sigma$ a closed two-sided immersed CMC surface in $M$. 
Assume there exists a real number $\eta$ and a $q$-dimensional vector space $\mathbb{V}^q$ of harmonic vector fields on $\Sigma$ such that any non-zero $\xi\in \mathbb{V}^q$ satisfies
\[\int_{\Sigma}\sum_{i=1}^2\left(|\sffM_M(e_i,\xi)|^2+|\sffM_M(e_i,\star\xi)|^2\right)-(R_{M}+H_\Sigma^2)|\xi|^2\ dV_\Sigma<2\eta \int_{\Sigma}|\xi|^2\ dV_\Sigma.\]
Then
\[ \#  \{ \text{eigenvalues\ of\ $\tilde L_\Sigma$\ that\ are\ strictly\ smaller\ than}\ \eta\} \geq \frac{q}{2d}.\]
\end{maintheorem}

And the corresponding index estimates:
\begin{maincorollary}{\ref{index estimates}}
Let $(M,g)$ be a $3$-dimensional Riemannian manifold without boundary isometrically embedded in $\real^d$ and $\Sigma$ a closed two-sided immersed CMC surface of genus $g$ in $M$.
Suppose that every non-zero $\xi\in \mathcal{H}^1(\Sigma)$ satisfies
\[\int_{\Sigma}\sum_{i=1}^2\left(|\sffM_M(e_i,\xi)|^2+|\sffM_M(e_i,\star\xi)|^2\right)-R_{M}|\xi|^2 \ dV_\Sigma<\int_\Sigma H_\Sigma^2|\xi|^2\ dV_\Sigma.\]
Then
\begin{equation*}
\Index(\Sigma)\geq\frac{g}{d}.
\end{equation*}
\end{maincorollary}

As mentioned above, the free boundary case is slightly different since, a priori, only harmonic vector fields that are tangential along the boundary provide admissible test functions.
However, it is still possible to find tangential vector fields whose dual vector, despite not being tangential any more, has zero average along the boundary.
This can be done as long as the dimension of the space of tangential harmonic vector fields is sufficiently large.
As a consequence the index estimates are fairly weaker.

The respective results for free boundary CMC surfaces are:

\begin{maintheorem}{\ref{main theorem boundary}}
Let $(M,\partial M,g)$ be a $3$-dimensional Riemannian manifold with boundary isometrically embedded in $\real^d$ and $\Sigma$ a compact two-sided immersed free boundary CMC surface in $M$. 
Assume there exists a real number $\eta$ and a $q$-dimensional vector space $\mathbb{W}^q$ of harmonic vector fields on $\Sigma$ that are tangential on the boundary $\partial\Sigma$, such that any non-zero $\xi\in \mathbb{W}^q$ satisfies
\begin{equation*}
\begin{aligned}
\int_{\Sigma} & \sum_{i=1}^2|\sffM_M(e_i,\xi)|^2+|\sffM_M(e_i,\star\xi)|^2\ dV_\Sigma\\
& -\int_\Sigma (R_{M} +H_\Sigma^2)|\xi|^2\ dV_\Sigma -2\int_{\partial \Sigma}H_{\partial M}|\xi|^2 \ dV_{\partial \Sigma} <2\eta \int_{\Sigma}|\xi|^2\ dV_\Sigma.
\end{aligned}
\end{equation*}
Then
\[ \#  \{ \text{eigenvalues\ of\ $\tilde L_\Sigma$\ that\ are\ smaller\ than}\ \eta\} \geq \frac{q-d}{2d}.\]
\end{maintheorem}

\begin{maincorollary}{\ref{index estimate free boundary}}
Let $(M,\partial M,g)$ be a $3$-dimensional Riemannian manifold with boundary isometrically embedded in $\real^d$ and $\Sigma$ a compact two-sided immersed free boundary CMC surface in $M$ with genus $g$ and $r$ boundary components. 
Suppose that every non-zero $\xi\in \harmonicvfb(\Sigma,\partial \Sigma)$ satisfies
\begin{equation*}
\begin{aligned}
\int_{\Sigma} & \sum_{i=1}^2|\sffM_M(e_i,\xi)|^2+|\sffM_M(e_i,\star\xi)|^2\ dV_\Sigma\\
 & -\int_\Sigma R_{M} |\xi|^2\ dV_\Sigma -2\int_{\partial \Sigma}H_{\partial M}|\xi|^2 \ dV_{\partial \Sigma} < \int_\Sigma H_\Sigma^2|\xi|\ dV_\Sigma.
\end{aligned}
\end{equation*}
Then
\[\Index(\Sigma)\geq\frac{2g+r-1-d}{2d}.\]
\end{maincorollary}

In the Theorems above the operator $\tilde L_\Sigma$ corresponds to the twisted Dirichlet eigenvalue problem for the Jacobi operator on admissible functions.
The number of negative eigenvalues is the index of $\Sigma$ in the CMC sense.

Let us mention that Cavalcande-de Oliveira in \cite{cavalcante-oliveira2017:arxiv,cavalcante-oliveira2018:arxiv} have obtained similar estimates independent of the inequality condition on the embedding of the ambient manifold for the particular case of closed CMC surfaces in $\real^3$, $S^3$ and free boundary CMC surfaces in mean convex domains of these two cases.

A conjecture attributed to Schoen and Marques-Neves (see \cite{marques2014, neves2014}) which says that minimal surfaces in $3$-manifolds of positive Ricci curvature have index bounded below by a linear function of its genus.
In \cite{ambrozio-carlotto-sharp2018.1} Ambrozio-Carlotto-Sharp have confirmed this conjecture in a large class of ambient spaces.
One may ask a similar question for closed CMC surfaces.
In this case, it follows from our results that any closed CMC surface of sufficiently large mean curvature, depending only on the ambient space, in any closed $3$-manifold has index bounded below by a linear function of the genus.
In some examples we compute explicitly the lower bound necessary for the mean curvature.

As another application, in some examples we are able to prove that closed stable CMC surfaces of sufficiently large mean curvature have to be spheres.
In particular, in $T^2\times\real$ and in $T^3$ this partially supports a conjecture of Hauswirth-Perez-Romon-Ros regarding the isoperimetric profile of such ambient spaces.
To be more specific, the authors conjecture that the isoperimetric profile is given by spheres, cylinders and pairs of planes.
Since isoperimetric surfaces are stable CMC surfaces, our result supports the section of the profile that is given by spheres.

This paper is divided as follows. 
In section $2$ we cover some of the necessary background and notations used throughout the paper.
In section $3$ we prove that Ros' test functions are admissible and prove our main theorems.
In section $4$ we discuss some applications and examples that satisfy the hypothesis of our main results.

\hfill

\textit{
Acknowledgements: The authors would like to thank Professors Jingyi Chen and Ailana Fraser for helpful conversations and comments on this work.
We are also thankful to Professor Marcos Cavalcante for corrections on an earlier version of this work and Francisco Torralbo for pointing out further applications.
}

\section{Preliminaries}
In this section, we will overview some background and establish notation that will be used throughout the article.

Suppose $(M,\partial M,g)$ is a complete Riemannian manifold of dimension $n+1$, with possibly empty boundary and let $(\Sigma^k, \partial \Sigma)$ be a compact immersed $k$-dimensional submanifold in $M$ with boundary $\partial \Sigma$ in $\partial M$.
In case $\partial M$ is empty, we consider $\partial \Sigma$ to also be empty, in which case we say $\Sigma^k$ is a closed (compact without boundary) immersed submanifold in $M$. 
When $k=n$, we say that $\Sigma^n$ is two-sided if there is a global unit normal vector field $N$ along $\Sigma$ in $M$. 
Denote the connection on $M$ and $\Sigma$ by $\connM$ and $\connS$, respectively.
The second fundamental form of $\Sigma$ is defined by $A_\Sigma(X,Y) = \connM_X Y-\connS_X Y$, its shape operator with respect to $N$ is denoted $S_\Sigma(X)$, the mean curvature vector is given by $\vec{H}_\Sigma=\tr_\Sigma(A_\Sigma)$ and its mean curvature is $H_\Sigma=g(\vec{H}_\Sigma,N)$. 

By Nash's Embedding theorem we may assume $M$ to be isometrically embedded into some Euclidean space $\real^d$ for some integer $d$.
If we denote the Euclidean connection by $D$, then the second fundamental form of $M$ in $\real^d$ is given by $\sffM_M(X,Y)=D_X Y -\connM_X Y$, for any two tangent vectors $X,Y$ on $M$.
It follows that for any two tangent vector $X,Y$ on $\Sigma$, the following orthogonal decomposition holds
\begin{equation}\label{decomposition2}
D_X Y=\connS_X Y+ A_\Sigma(X,Y)+\sffM_M(X,Y).
\end{equation}

Denote by $\Delta^{[p]}$ the Hodge Laplacian of $\Sigma$ acting on $p$-forms and let $\nabla^*\nabla$ denote the rough Laplacian on vector fields. 
By choosing a local geodesic frame $\{e_1,\cdots,e_n\}$ of $\Sigma$, they are defined by
\[\Delta^{[p]} w=(d\delta+\delta d)w, \ \ \nabla^*\nabla X=-\sum_{i=1}^n\nabla_{e_i}\nabla_{e_i}X\]
for $p$-forms $w$ and vector fields $X$ on $\Sigma$, where $d$ is the exterior differential and $\delta$ is the codifferential. 
If $\Sigma$ is closed, then $w$ is harmonic if and only if $dw=0$ and $\delta w=0$.

Given a $1$-form $w$, its dual vector field $w^{\natural}$ is defined by $g(w^{\natural},X)=w(X)$ for any vector $X$.
Reversely, given a vector field $X$ we denote its dual $1$-form $X^\flat(Y)=g(X,Y)$.

Let us now restrict ourselves to when $\Sigma$ is a surface.
We may define $\star X= (\star X^\flat)^\natural$, where $\star$ is the Hodge operator with respect to the metric on $\Sigma$.
The Laplacian acting on a vector field $X$ may be defined as $\Delta_{[1]}X=\Delta^{[1]}X^\flat$.
If $X$ is a vector field on $\Sigma$, we have $\star dX^\flat=\divergent_\Sigma \star X$ and $\delta X^\flat=-\divergent_\Sigma X$.
Hence, when $\Sigma$ is closed, $X$ is harmonic if and only if $\divergent_\Sigma X=\divergent_\Sigma \star X =0$.

When $\Sigma$ is a closed surface we denote the space of harmonic vector fields on $\Sigma$ as
\[\harmonicvf(\Sigma)=\{\xi\in T\Sigma:\ \divergent(\xi)=\divergent(\star \xi)=0 \ \text{on}\ \Sigma\ \}.\]
If $\Sigma$ has genus $g$ then $\dimension \harmonicvf(\Sigma)=2g$.

When $\Sigma$ has nonempty boundary, a vector field being harmonic is not equivalent to $\divergent(\xi)=\divergent(\star \xi)=0$. 
Instead we shall consider the following space:
\[\harmonicvfb(\Sigma,\partial \Sigma)=\{\xi \in T\Sigma:\ \divergent(\xi)=\divergent(\star \xi)=0 \ \text{on}\ \Sigma\ \text{and}\ \xi \text{ is tangent along}\ \partial \Sigma\}.\]
Any vector field in this space is harmonic and tangential along $\partial \Sigma$. 
By Hodge theorem and Poincar\'e-Lefschetz duality, we know that $\harmonicvfb(\Sigma,\partial \Sigma)$ is isomorphic to $H_1(\Sigma,\partial\Sigma; \mathbb{R})$. 
Hence $\dimension \harmonicvfb(\Sigma,\partial \Sigma)=2g+r-1$, where $g$ is the genus and $r$ is the number of boundary components of $\Sigma$.
See for example \cite[Lemma A.0.1]{sargent2017} for a proof.

It is well know that Weitzenbock's formula relates the Hodge Laplacian and rough Laplacian of a vector field. 
That is, for a vector field $\xi$ on $\Sigma$ we have
\begin{equation}\label{bochner}
\Delta_{[1]}\xi=\nabla^*\nabla \xi+\operatorname{Ric}_\Sigma(\xi),
\end{equation}
where $\operatorname{Ric}_\Sigma(\xi)$ is defined by $\operatorname{Ric}_\Sigma(\xi,X)=g(\operatorname{Ric}_\Sigma(\xi),X)$ for every tangent vector $X$.

Now, let us define the Morse index of a CMC hypersurface.
Let $\Sigma$ be a two-sided hypersurface with boundary $\partial \Sigma$ in $(M,\partial M)$.
The first variation formula of area with respect to a normal variation induced by a function $u$ on $\Sigma$ is given by 
\begin{equation}\label{firstvariation}
\frac{dV_t}{dt}\Big|_{t=0}=-n\int_\Sigma uH_\Sigma\ dV_\Sigma+\int_{\partial \Sigma} g(\eta, X)\ dV_{\partial \Sigma},
\end{equation}
where $\eta$ is the outward pointing unit normal vector field along $\partial\Sigma$ and $X$ is the orthogonal projection of $uN$ onto $\partial M$.
A free boundary constant mean curvature hypersurface, henceforth denoted free boundary CMC hypersurface, is a critical point of the area functional with respect to variations that preserve enclosed volume. 
That is, $u$ must satisfy $\int_{\Sigma}u\ dV_\Sigma=0$ and we call it an admissible variation. 
Note that $\partial \Sigma$ must intersect $\partial M$ orthogonally, which is called the free boundary property, i.e., $\eta=-\nu$ where $\nu$ is the inward pointing normal vector field along $\partial M.$  

The quadratic form associated to the second variation of area of a free boundary CMC hypersurface with respect to $u$ is 
\begin{equation}\label{variation1}
Q_\Sigma(u,u)=\int_\Sigma |\nabla u|^2-(\operatorname{Ric}_M(N,N)+|A_\Sigma|^2)u^2\ dV_\Sigma-\int_{\partial \Sigma}h_{\partial M}(N,N)\ dV_{\partial \Sigma},
\end{equation}
where $\operatorname{Ric}_M(N,N)$ is the Ricci curvature of $M$ in the direction $N$, $|A_\Sigma|^2=\tr_\Sigma(S_\Sigma^TS_\Sigma)$ is the square norm of second fundamental form and $h_{\partial M}(X,Y)=g(\connM_X Y,\nu)$ denotes the scalar second fundamental form of $\partial M$ in $M$ with respect to $\nu$, i.e., 
It follows from the free boundary property that $h_{\partial M}(N,N)$ in $(\ref{variation1})$ is well defined. 

By the divergence theorem we can also write $(\ref{variation1})$ as 
\begin{equation*}
Q_\Sigma(u,u)=-\int_{\Sigma} u\Delta u+(\operatorname{Ric}_M(N,N)+|A_\Sigma|^2)u^2\ dV_\Sigma+\int_{\partial \Sigma}u\frac{\partial u}{\partial \eta}-h_{\partial M}(N,N)u^2\ dV_{\partial \Sigma}.
\end{equation*}

We denote by $L_\Sigma=\Delta+\operatorname{Ric}_M(N,N)+|A_\Sigma|^2$ the Jacobi operator of $\Sigma$, where $\Delta u = \divergent_\Sigma(\connS u)$. 

The Morse index of a free boundary CMC hypersurface is defined as the index of the quadratic form $Q_\Sigma$ restricted to the subspace of smooth functions $u$ on $\Sigma$ that are admissible, that is, $\int_\Sigma u dV_\Sigma=0$.
When the index is zero, we say that the hypersurface is stable in the CMC sense.

This notion of index coincides with the number of negative eigenvalues for the operator $\tilde{L}_\Sigma u = L_\Sigma u -\frac{1}{|\Sigma|}\int_{\Sigma}L_\Sigma u dV_\Sigma$ acting on admissible functions on $\Sigma$ (see \cite[Proposition 2.2]{barbosa-berard2000}).
That is, the following eigenvalue problem:
\begin{equation}\label{equations}
\left\{
   \begin{aligned}
    & \tilde{L}_\Sigma u + \lambda u = 0, && \text{in}\ \Sigma\\
    & \frac{\partial u}{\partial \eta}-h_{\partial M}(N,N)u=0, && \text{on}\ \partial \Sigma \\
    & \int_\Sigma u dV_\Sigma=0.
   \end{aligned}\right.
\end{equation}
The spectrum of $\tilde{L}_\Sigma$ consists of eigenvalues $\tilde\lambda_1\leq\tilde\lambda_2\leq\ldots$ corresponding to admissible eigenfunctions $\tilde\phi_1,\tilde\phi_2,\ldots$ in $L^2(\Sigma)$, see \cite{barbosa-berard2000} for details.
Furthermore, the eigenvalues $\tilde\lambda_i$ satisfy the min-max characterization with respect to the quadratic form $Q_\Sigma$ when restricted to admissible functions.
That is, $\lambda_k=\inf \frac{Q_\Sigma(u,u)}{\int_\Sigma u^2 dV_\Sigma}$, where the infimum is taken over all smooth admissible functions that are orthogonal to the first $k-1$ eigenfunctions $\tilde\phi_1,\ldots,\tilde\phi_{k-1}$.

In the special case where the boundaries $\partial M$ and $\partial \Sigma$ are both empty, the corresponding variation formulas are obtained similarly by simply making the terms that involve the boundary as $0$.
 
Similarly we can define the Morse index of a closed CMC hypersurface as the number of negative eigenvalues of the Jacobi operator with respect to admissible variations. 
Note that the Jacobi operators are the same in closed and boundary case, the only difference is the boundary condition in the eigenvalue problem.

\begin{remark}
Throughout this paper we always assume that $\Sigma$ is a two-sided surface.
Nevertheless, every theorem has an analogous statement when $\Sigma$ is one-sided.
The proofs are similar but instead of working with $\Sigma$ we use its two-sided double cover immersion $\tilde{\Sigma}$.
A variation on $\tilde\Sigma$ is given by a function that is anti-invariant with respect to the involution that changes sheets.
In this case we consider harmonic $1$-forms that are again anti-invariant under the action of the involution.
The space of such forms has dimension equal to $\tilde g$, which is the genus of $\tilde\Sigma$ in the closed case.
Similarly, in the free boundary case, the corresponding space of tangential anti-invariant harmonic $1$-forms has dimension $\tilde g + r-1$  where $r$ is the number of ends of $\Sigma$.
\end{remark}

\section{Main results}
Similarly to \cite{ros2006} and \cite{ambrozio-carlotto-sharp2018.1,ambrozio-carlotto-sharp2018.2} we intend to use the coordinates of harmonic vector fields as test functions to obtain our index estimates.
However, we must first verify that such coordinates are admissible in the CMC sense.

The following is a general result in arbitrary codimension that follows from the divergence theorem.
\begin{lemma}\label{findtestfunctions}
Suppose $(\Sigma^k,\partial\Sigma, g)$ is a $k$-dimensional submanifold $(k\geq 2)$ isometrically embedded in some Euclidean space $\real^d$. 
\begin{itemize}
\item If $\partial \Sigma=\emptyset$ and $\xi$ is a harmonic vector field in $\harmonicvf(\Sigma)$, then 
\[\int_{\Sigma}\langle V,\xi\rangle \ dV_\Sigma=0,\]
where $V$ is a constant vector in $\real^d$. 
In particular, if $k=2$, we also have
\[\int_{\Sigma}\langle V,\star\xi\rangle\ dV_\Sigma=0.\]
\item If $\partial \Sigma\neq \emptyset$ and $\xi$ is a tangential harmonic vector field in $\harmonicvfb(\Sigma,\partial\Sigma)$, then
\[\int_{\Sigma}\langle V,\xi\rangle \ dV_\Sigma=0.\]
\end{itemize}
\end{lemma}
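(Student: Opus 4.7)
The plan is to exhibit the integrand $\langle V, \xi\rangle$ as an intrinsic divergence on $\Sigma$ and then invoke Stokes' theorem, using the harmonicity (and, in the free boundary case, tangentiality) of $\xi$ to kill both the interior term and the boundary term.

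First I would define the linear coordinate function $f\colon \Sigma\to\real$ by $f(x)=\langle V,x\rangle$, where $x$ denotes the position vector of $\Sigma\subset\real^d$. For any tangent vector $X\in T\Sigma$ one has $X(f)=\langle V,X\rangle$, which is exactly the inner product of $X$ with the tangential projection $V^T$ of $V$ to $T\Sigma$. Hence $\connS f = V^T$. Since $\xi$ is tangent to $\Sigma$, the Euclidean inner product restricts as $\langle V,\xi\rangle = \langle V^T,\xi\rangle = g(\connS f,\xi)$.

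Next I would apply the divergence theorem on $\Sigma$ to the vector field $f\xi$:
\[
\int_\Sigma g(\connS f,\xi)\, dV_\Sigma
= -\int_\Sigma f\,\divergent_\Sigma(\xi)\, dV_\Sigma + \int_{\partial\Sigma} f\, g(\xi,\eta)\, dV_{\partial\Sigma},
\]
where $\eta$ is the outward conormal of $\partial\Sigma$. In the closed case both sides of the right-hand side vanish automatically: $\partial\Sigma=\emptyset$ kills the boundary term, and $\divergent_\Sigma(\xi)=0$ follows from $\xi\in\harmonicvf(\Sigma)$ (as noted in the Preliminaries, harmonicity of a vector field on a closed surface is equivalent to $\divergent_\Sigma \xi=\divergent_\Sigma \star\xi=0$). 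For the second assertion of the closed case, I would simply repeat the argument with $\star\xi$ in place of $\xi$; since $k=2$ the Hodge dual $\star\xi$ is again a vector field, and $\divergent_\Sigma(\star\xi)=0$ by the same characterization.

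For the free boundary case, $\divergent_\Sigma(\xi)=0$ by definition of $\harmonicvfb(\Sigma,\partial\Sigma)$, so the interior term vanishes; and $\xi$ being tangent to $\partial\Sigma$ means $g(\xi,\eta)=0$ pointwise along $\partial\Sigma$, so the boundary term also vanishes. This finishes all three cases, and there is no genuine obstacle—the only conceptual point is the translation between the Euclidean pairing $\langle V,\xi\rangle$ and an intrinsic gradient on $\Sigma$, which works because $V$ is constant in $\real^d$.
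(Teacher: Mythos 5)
Your proof is correct and follows essentially the same route as the paper: both define $f(x)=\langle V,x\rangle$, observe $\connS f=V^T$, and apply the divergence theorem to $f\xi$, using $\divergent_\Sigma(\xi)=0$ (and $\divergent_\Sigma(\star\xi)=0$ when $k=2$) to kill the interior term and tangentiality of $\xi$ along $\partial\Sigma$ to kill the boundary term. No gaps; this matches the paper's argument in full.
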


\begin{proof}
Denote by $f(x)=\langle V,x\rangle$, then its gradient is given by $\connS f=V^T$, where $V^T$ denotes the projection of $V$ onto $\Sigma$.
If $\Sigma$ is a closed submanifold and $\xi\in \harmonicvf(\Sigma)$ we get \[0=\int_\Sigma \operatorname{div}_\Sigma(f\xi)=\int_\Sigma \langle \nabla^\Sigma f,\xi\rangle+f\operatorname{div}(\xi)=\int_\Sigma \langle \nabla^\Sigma f,\xi\rangle=\int_\Sigma \langle V,\xi\rangle.\]
When $k=2$, the argument follows by substituting $\star \xi$ and observing that it is also a harmonic vector field. 

In the case where $\Sigma$ has nonempty boundary and $\xi\in \harmonicvfb(\Sigma)$ we get
\[\int_\Sigma \langle V,\xi\rangle=\int_\Sigma \operatorname{div}_\Sigma(f\xi)=\int_{\partial \Sigma}f\langle\xi,\eta\rangle=0\]
The last equality is due to the tangential property of $\xi$. 
This completes the proof of the Lemma.
\end{proof}

\begin{remark}\label{remark test functions}
When $\Sigma$ has nonempty boundary and dimension $2$ this argument does not work for $\star \xi$ since it may not be tangential on the boundary when $\xi$ is tangential. 
In fact, if $\xi$ is tangential along the boundary then $\star \xi$ is necessarily orthogonal to the boundary.
However, we may still be able to find tangential harmonic vector fields $\xi$ such that $f\langle\xi,\eta\rangle$ has zero average along the boundary so its coordinates may still be admissible test functions.
\end{remark}

The following proposition involves calculations of coordinates of a harmonic vector field as test functions for the second variation formula both in closed case and boundary case. 
In fact, the calculations in closed case directly follow from the result in boundary case.
\begin{proposition}\label{calculation}
Let $(M,\partial M,g)$ be a $3$-dimensional Riemannian manifold isometrically embedded in some Euclidean space $\real^d$. 
Let $(\Sigma,\partial \Sigma)$ be a two-sided immersed surface in $M$ with free boundary $\partial \Sigma$ in $\partial M$.

\begin{itemize}
\item  Given a vector field $\xi\in \harmonicvfb(\Sigma,\partial \Sigma)$, denote
\[u_j=\langle \xi,E_j\rangle\]
where $\{E_j\}_{j=1}^{d}$ is the canonical basis of $\real^d$. Then
\begin{equation}\label{plugin}
\begin{aligned}
\sum_{j=1}^{d}Q_\Sigma(u_j,u_j) = &\int_{\Sigma}\sum_{i=1}^2|\sffM_M(e_i,\xi)|^2+|A_\Sigma(e_i,\xi)|^2\ dV_{\Sigma}\\
   & \quad - \int_\Sigma \left(\frac{|A_\Sigma|^2}{2}+\frac{\scalar_{M}}{2}+\frac{H_\Sigma^2}{2}\right)|\xi|^2\ dV_\Sigma-\int_{\partial\Sigma}H_{\partial M}|\xi|^2\ dV_{\partial \Sigma} 
\end{aligned}
\end{equation}
    where $\{e_i\}_{i=1}^2$ is an orthonormal frame on $\Sigma$, $\scalar_M$ is the scalar curvature of $M$ and $H_{\partial M}$ denotes the mean curvature of $\partial M$ with respect to the inner normal vector $\nu$.
\item  In particular, if $\partial M$ and $\partial \Sigma$ are empty, for any vector field $\xi\in \mathcal{H}^1(\Sigma)$, we have
\begin{equation}\label{plugin2}
\begin{aligned}
\sum_{j=1}^{d}Q_\Sigma(u_j,u_j) = &\int_{\Sigma}\sum_{i=1}^2|\sffM_M(e_i,\xi)|^2+|A_\Sigma(e_i,\xi)|^2\ dV_{\Sigma}\\
   & \quad - \int_\Sigma \left(\frac{|A_\Sigma|^2}{2}+\frac{\scalar_{M}}{2}+\frac{H_\Sigma^2}{2}\right)|\xi|^2\ dV_\Sigma
\end{aligned}
\end{equation}
\end{itemize}
\end{proposition}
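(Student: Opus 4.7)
The plan is to evaluate each term of $\sum_{j=1}^d Q_\Sigma(u_j, u_j)$ directly, using the orthogonal decomposition (\ref{decomposition2}) of the Euclidean derivative into $\Sigma$-tangential, $\Sigma$-normal-in-$M$ and $M$-normal-in-$\real^d$ pieces, and then invoking the harmonicity of $\xi$ via the Weitzenb\"ock formula together with the Gauss equation. The closed case will be an immediate specialization of the boundary case once all $\partial\Sigma$ integrals are discarded.

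First, since $\{E_j\}_{j=1}^d$ is orthonormal in $\real^d$ and $\xi$ is tangent to $\Sigma$, one has $\sum_j u_j^2=|\xi|^2$ pointwise. For the gradient, fix a local orthonormal frame $\{e_1,e_2\}$ on $\Sigma$ and apply (\ref{decomposition2}) to $D_{e_i}\xi$. Since the three pieces $\connS_{e_i}\xi$, $A_\Sigma(e_i,\xi)N$ and $\sffM_M(e_i,\xi)$ lie in mutually orthogonal subspaces of $\real^d$, squaring the identity $e_iu_j=\langle D_{e_i}\xi,E_j\rangle$ and summing over $j$ yields
\[\sum_{j=1}^d|\connS u_j|^2=|\connS\xi|^2+\sum_{i=1}^2|A_\Sigma(e_i,\xi)|^2+\sum_{i=1}^2|\sffM_M(e_i,\xi)|^2.\]

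Using $\Delta_{[1]}\xi=0$ in (\ref{bochner}) gives $\nabla^*\nabla\xi=-\operatorname{Ric}_\Sigma(\xi)$, so pairing with $\xi$ and integrating by parts,
\[\int_\Sigma|\connS\xi|^2\,dV_\Sigma=-\int_\Sigma\operatorname{Ric}_\Sigma(\xi,\xi)\,dV_\Sigma+\int_{\partial\Sigma}\langle\connS_\eta\xi,\xi\rangle\,dV_{\partial\Sigma}.\]
Since $\dim\Sigma=2$, $\operatorname{Ric}_\Sigma(\xi,\xi)=K_\Sigma|\xi|^2$. The Gauss equation together with the three-dimensional identity $\overline{K}(e_1,e_2)=\frac{\scalar_M}{2}-\operatorname{Ric}_M(N,N)$ yield
\[K_\Sigma=\frac{\scalar_M}{2}-\operatorname{Ric}_M(N,N)+\frac{H_\Sigma^2-|A_\Sigma|^2}{2},\]
and substituting back cancels the $\operatorname{Ric}_M(N,N)|\xi|^2$ contribution in $Q_\Sigma$ while rearranging the remaining interior terms into exactly the bulk integrand in (\ref{plugin}).

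It remains to handle the boundary. Since $\Sigma$ meets $\partial M$ orthogonally, on $\partial\Sigma$ the pair $\{N,\tau\}$ is an orthonormal basis of $T(\partial M)$ (where $\tau$ is the unit tangent to $\partial\Sigma$), so $H_{\partial M}=h_{\partial M}(N,N)+h_{\partial M}(\tau,\tau)$. Writing $\xi=c\tau$ along $\partial\Sigma$ and using the divergence-free conditions $\divergent\xi=\divergent(\star\xi)=0$ together with $\connM_\tau\tau=\connS_\tau\tau+A_\Sigma(\tau,\tau)N$, a local computation should identify the Weitzenb\"ock boundary term as
\[\int_{\partial\Sigma}\langle\connS_\eta\xi,\xi\rangle\,dV_{\partial\Sigma}=-\int_{\partial\Sigma}h_{\partial M}(\tau,\tau)|\xi|^2\,dV_{\partial\Sigma}\]
modulo tangential exact differentials that integrate to zero around each closed component of $\partial\Sigma$. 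This matching is the step I expect to be the main obstacle, since it requires carefully exploiting the orthogonal intersection, tangentiality of $\xi$, and both harmonic conditions simultaneously. Combining with the $-\int_{\partial\Sigma}h_{\partial M}(N,N)|\xi|^2$ boundary term coming from $Q_\Sigma$ produces $-\int_{\partial\Sigma}H_{\partial M}|\xi|^2$, completing (\ref{plugin}). The closed statement (\ref{plugin2}) follows by running the same argument with $\partial\Sigma=\emptyset$, in which case both boundary contributions are automatically absent.
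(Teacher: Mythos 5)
Your interior argument is sound and coincides with the paper's own proof: the orthogonal splitting of $D_{e_i}\xi$ via $(\ref{decomposition2})$, the Weitzenb\"ock reduction $\nabla^*\nabla\xi=-K_\Sigma\xi$, and the Gauss-equation substitution are exactly the steps used there, and the bookkeeping you describe does produce the bulk integrand of $(\ref{plugin})$, with $(\ref{plugin2})$ following by discarding boundary terms. The genuine gap is the boundary step, which you explicitly leave as an expectation rather than a proof: the identity
\begin{equation*}
\int_{\partial\Sigma}\langle\connS_\eta\xi,\xi\rangle\ dV_{\partial\Sigma}=-\int_{\partial\Sigma}h_{\partial M}(\tau,\tau)|\xi|^2\ dV_{\partial\Sigma}
\end{equation*}
is the heart of the proposition, and the route you sketch (write $\xi=c\tau$ on $\partial\Sigma$, use the divergence conditions, discard ``tangential exact differentials'') cannot work as described. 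The term $\langle\connS_\eta\xi,\xi\rangle=\tfrac12\connS_\eta|\xi|^2$ is a derivative of $\xi$ in the direction $\eta$, transversal to $\partial\Sigma$, so it is not determined by the boundary values $\xi=c\tau$; no integration by parts along the closed curves of $\partial\Sigma$ will convert it into boundary data. Some interior information must be used to trade the $\eta$-derivative for a tangential one, and identifying that mechanism is precisely the missing content.

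The missing idea (and the paper's actual argument) is a \emph{pointwise} use of closedness of $\xi^\flat$. On a surface, $\divergent_\Sigma(\star\xi)=0$ is equivalent to $d\xi^\flat=0$, and evaluating this $2$-form on the pair $(\eta,\xi)$ gives, pointwise along $\partial\Sigma$,
\begin{equation*}
0=d\xi^\flat(\eta,\xi)=\langle\connS_\eta\xi,\xi\rangle-\langle\connS_\xi\xi,\eta\rangle,
\end{equation*}
which converts the transversal derivative into a tangential one. Then, since $\eta$ is tangent to $\Sigma$, $\langle\connS_\xi\xi,\eta\rangle=\langle\connM_\xi\xi,\eta\rangle$; by the free boundary property $\eta=-\nu$, and since $\xi$ is tangent to $\partial M$ along $\partial\Sigma$, this equals $-\langle\connM_\xi\xi,\nu\rangle=-h_{\partial M}(\xi,\xi)=-h_{\partial M}(\tau,\tau)|\xi|^2$. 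This is an exact pointwise identity, with no correction terms, and the condition $\divergent_\Sigma\xi=0$ plays no role in it. Combined with $H_{\partial M}|\xi|^2=h_{\partial M}(\tau,\tau)|\xi|^2+h_{\partial M}(N,N)|\xi|^2$, which you state correctly, it completes $(\ref{plugin})$; compare the paper's $(\ref{meancurvature})$, $(\ref{boundaryterm})$ and $(\ref{boundarycurvature})$.
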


\begin{proof}
Using a local orthonormal basis $\{e_i\}_{i=1}^2$ on $\Sigma$ we have
\[\nabla^\Sigma u_j=\sum_{i=1}^2e_i\langle \xi,E_j\rangle e_i=\sum_{i=1}^2\langle D_{e_i}\xi,E_j\rangle e_i.\]
 Then $(\ref{decomposition2})$ gives
\[D_{e_i}\xi=\nabla^\Sigma_{e_i}\xi+A_\Sigma(e_i,\xi)+\sffM_M(e_i,\xi).\]
Thus,
\begin{eqnarray}\label{111}
\sum_{j=1}^d|\nabla^{\Sigma} u_j|^2&=& \sum_{j=1}^d\sum_{i=1}^2|\langle D_{e_i}\xi,E_j\rangle|^2\\
&=& \sum_{i=1}^2\sum_{j=1}^d \langle \nabla^\Sigma_{e_i}\xi,E_j\rangle^2+\langle A_\Sigma(e_i,\xi),E_j\rangle^2+\langle \sffM_M(e_i,\xi),E_j\rangle^2\nonumber\\
&=& |\nabla^\Sigma \xi|^2+\sum_{i=1}^2| A_\Sigma(e_i,\xi)|^2+ |\sffM_M(e_i,\xi)|^2.\label{gradient}
\end{eqnarray}
Gauss' equation for $\Sigma$ in $M$ gives us
\[2K_\Sigma=\scalar_M-2\operatorname{Ric}_M(N,N)-|A_\Sigma|^2+H_\Sigma^2.\]
Hence,
\begin{equation}\label{Ric}\int_\Sigma \operatorname{Ric}_M(N,N)u_j^2 \ dV_\Sigma=\int_\Sigma \left(\frac{\scalar_M}{2}-\frac{|A_\Sigma|^2}{2}+\frac{H_\Sigma^2}{2}-K_\Sigma\right)u_j^2\ dV_\Sigma.\end{equation}

It follows from Weitzenbock's formula (\ref{bochner}), $\xi$ being harmonic vector field and $\Sigma$ being surface that
\[\nabla^*\nabla \xi=-K_\Sigma\xi.\]
By computing the exterior derivative along $\partial \Sigma$ we have
\[d\xi^\flat(\eta,\xi)=\langle \nabla^\Sigma_\eta\xi,\xi\rangle-\langle \nabla^\Sigma_\xi \xi,\eta\rangle.\]
Since $d\xi^\flat=0$,
\[\langle\nabla_\eta^\Sigma\xi,\xi\rangle=\langle\nabla_\xi^\Sigma\xi,\eta\rangle.\]
Then It follows from the divergence theorem that
\begin{eqnarray}\label{meancurvature}
\int_\Sigma \Delta|\xi|^2 \ dV_\Sigma&=& \int_{\partial \Sigma}\nabla^\Sigma_\eta |\xi|^2\ dV_{\partial \Sigma}\nonumber\\
&=& 2\int_{\partial \Sigma}\langle\nabla_\eta^\Sigma\xi,\xi\rangle\ dV_{\partial \Sigma}\nonumber\\
&=& 2\int_{\partial \Sigma}\langle\nabla_\xi^\Sigma\xi,\eta\rangle\ dV_{\partial \Sigma}\nonumber\\
&=& -2\int_{\partial \Sigma}\langle \connM_\xi\xi,\nu\rangle\ dV_{\partial \Sigma}\nonumber\\
&=&-2\int_{\partial \Sigma}h_{\partial M}(\xi,\xi)\ dV_{\partial \Sigma},
\end{eqnarray}
which together with 
\[\Delta |\xi|^2=-2\langle\nabla^*\nabla\xi,\xi\rangle+2|\nabla^\Sigma \xi|^2\]
implies that
\begin{eqnarray}\label{boundaryterm}\int_\Sigma|\nabla^\Sigma \xi|^2\ dV_\Sigma&=&\int_{\Sigma}\frac{\Delta|\xi|^2}{2}\ dV_\Sigma+\int_{\Sigma}\langle \nabla^*\nabla\xi,\xi\rangle\ dV_\Sigma\nonumber\\
&=&-\int_{\partial \Sigma}h_{\partial M}(\xi,\xi)\ dV_{\partial \Sigma}-\int_\Sigma K_\Sigma|\xi|^2\ dV_\Sigma.\end{eqnarray}
Note that
\begin{equation}\label{boundarycurvature}H_{\partial M}|\xi|^2=h_{\partial M}(\xi,\xi)+h_{\partial M}(N,N)|\xi|^2.\end{equation}
Then $(\ref{plugin})$ in the proposition follows from $(\ref{gradient})$, $(\ref{Ric})$,  $(\ref{boundaryterm})$, $(\ref{boundarycurvature})$ and $(\ref{variation1})$.  
The formula $(\ref{plugin2})$ directly follows by ignoring boundary terms.
\end{proof}

\begin{remark}\label{remark1}
Since $|\star \xi|=|\xi|$, we have same result for $\star\xi$ as $(\ref{meancurvature})$ that
\begin{equation}\label{starmeancurvature}
\int_\Sigma \Delta|\star\xi|^2=\int_\Sigma \Delta|\xi|^2 \ dV_\Sigma=\int_{\partial \Sigma}h_{\partial M}(\xi,\xi)\ dV_{\partial \Sigma}.
\end{equation}
It means that we can have same equation as $(\ref{plugin})$ and $(\ref{plugin2})$ for $\star\xi$ whether or not its coordinate functions are admissible.
\end{remark}

The following is the main theorem for closed CMC surfaces in $3$-dimensional manifolds embedded in Euclidean space.
It estimates the number of eigenvalues of the $\tilde L_\Sigma$ below a certain threshold $\eta$ under certain conditions on the embedding and the geometry of $M$.
This result is often referred to as a ``concentration of spectrum inequality'' (see \cite{ambrozio-carlotto-sharp2018.1}).
Recall that $\tilde L_\Sigma u=L_\Sigma u-\int_\Sigma L_\Sigma u dV_\Sigma$.

\begin{theorem}\label{main theorem closed}
Let $(M,g)$ be a $3$-dimensional Riemannian manifold without boundary isometrically embedded in $\real^d$ and $\Sigma$ a closed two-sided immersed CMC surface in $M$. 
Assume there exists a real number $\eta$ and a $q$-dimensional vector space $\mathbb{V}^q$ of harmonic vector fields on $\Sigma$ such that any non-zero $\xi\in \mathbb{V}^q$ satisfies
\[\int_{\Sigma}\sum_{i=1}^2\left(|\sffM_M(e_i,\xi)|^2+|\sffM_M(e_i,\star\xi)|^2\right)-(R_{M}+H_\Sigma^2)|\xi|^2\ dV_\Sigma<2\eta \int_{\Sigma}|\xi|^2\ dV_\Sigma.\]
Then
\[ \#  \{ \text{eigenvalues\ of\ $\tilde L_\Sigma$\ that\ are\ strictly\ smaller\ than}\ \eta\} \geq \frac{q}{2d}.\]
\end{theorem}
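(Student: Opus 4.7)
The plan is to use, for each $\xi\in\mathbb{V}^q$, the coordinate functions of both $\xi$ and $\star\xi$ with respect to the standard basis $\{E_j\}_{j=1}^d$ of $\real^d$ as test functions for $Q_\Sigma$, and then to compare the resulting trace with the first eigenvalues of $\tilde L_\Sigma$ via a linear-algebra dimension count in the spirit of Ros \cite{ros2006} and Ambrozio--Carlotto--Sharp \cite{ambrozio-carlotto-sharp2018.1}. Fix a nonzero $\xi\in\mathbb{V}^q$ and set $u_j:=\langle\xi,E_j\rangle$ and $v_j:=\langle\star\xi,E_j\rangle$. Since $\Sigma$ is closed, $\star\xi$ also lies in $\harmonicvf(\Sigma)$, so Lemma \ref{findtestfunctions} gives $\int_\Sigma u_j\,dV_\Sigma = \int_\Sigma v_j\,dV_\Sigma = 0$, and therefore all $2d$ functions are admissible.

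The key step is the pointwise identity
\[
\sum_{i=1}^2\bigl(|A_\Sigma(e_i,\xi)|^2+|A_\Sigma(e_i,\star\xi)|^2\bigr)=|A_\Sigma|^2\,|\xi|^2,
\]
which I would verify by diagonalising the scalar second fundamental form at a point with principal curvatures $\kappa_1,\kappa_2$, writing $\xi=ae_1+be_2$ and $\star\xi=-be_1+ae_2$, and noting that the four squared terms sum to $(a^2+b^2)(\kappa_1^2+\kappa_2^2)$. Applying Proposition \ref{calculation} to both $\xi$ and to $\star\xi$, using $|\star\xi|=|\xi|$, and adding the two formulas, the $|A_\Sigma|^2$ contribution cancels and one obtains
\[
\sum_{j=1}^d\bigl(Q_\Sigma(u_j,u_j)+Q_\Sigma(v_j,v_j)\bigr)=\int_\Sigma\sum_{i=1}^2\bigl(|\sffM_M(e_i,\xi)|^2+|\sffM_M(e_i,\star\xi)|^2\bigr)-(\scalar_M+H_\Sigma^2)|\xi|^2\,dV_\Sigma.
\]
The hypothesis on $\mathbb{V}^q$ then bounds this trace strictly above by $2\eta\int_\Sigma|\xi|^2\,dV_\Sigma=\eta\sum_{j=1}^d\int_\Sigma(u_j^2+v_j^2)\,dV_\Sigma$.

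Let $K$ denote the number of eigenvalues of $\tilde L_\Sigma$ strictly smaller than $\eta$, with corresponding admissible eigenfunctions $\tilde\phi_1,\ldots,\tilde\phi_K$. I would argue by contradiction: if $K<q/(2d)$, then the linear map
\[
\Phi\colon\mathbb{V}^q\longrightarrow\real^{2dK},\qquad \xi\longmapsto\Bigl(\int_\Sigma u_j\tilde\phi_k\,dV_\Sigma,\ \int_\Sigma v_j\tilde\phi_k\,dV_\Sigma\Bigr)_{j,k},
\]
has nontrivial kernel, so there is a nonzero $\xi$ for which every $u_j$ and $v_j$ is admissible and $L^2$-orthogonal to $\tilde\phi_1,\ldots,\tilde\phi_K$. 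The min-max characterisation recalled in Section 2 then forces $Q_\Sigma(u_j,u_j)\geq\eta\int_\Sigma u_j^2\,dV_\Sigma$ and similarly for $v_j$; summing over $j$ contradicts the strict upper bound established above. Hence $K\geq q/(2d)$. The main technical obstacle is the pointwise identity and the accompanying bookkeeping of curvature terms coming from Gauss' equation inside Proposition \ref{calculation}; this is precisely the place where the CMC correction $H_\Sigma^2$ emerges and where the two-dimensionality of $\Sigma$ is genuinely used, the remaining dimension count being essentially formal.
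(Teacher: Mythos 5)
Your proposal is correct and follows essentially the same route as the paper: coordinates of $\xi$ and $\star\xi$ as admissible test functions via Lemma \ref{findtestfunctions}, the pointwise identity $\sum_{i}|A_\Sigma(e_i,\xi)|^2+|A_\Sigma(e_i,\star\xi)|^2=|A_\Sigma|^2|\xi|^2$ to cancel the $|A_\Sigma|^2$ terms in the summed trace from Proposition \ref{calculation}, and then the rank--nullity plus min-max contradiction. The only cosmetic difference is that you verify the pointwise identity by diagonalising $A_\Sigma$ in principal directions, whereas the paper uses the frame $e_1=\xi/|\xi|$, $e_2=\star\xi/|\star\xi|$; both computations are equally valid.
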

\begin{proof}
Firstly, from Lemma $\ref{findtestfunctions}$ we have that $u_j$ and $u_j^\star=\langle\star\xi, E_j\rangle$ for $j=1,\cdots,d$ are admissible test functions. 
Applying the same calculation for $(\ref{plugin2})$ to $u_j^{\star}$ we have that
\begin{equation}\label{dual}
\begin{aligned}
\sum_{j=1}^{d}Q_\Sigma(u_j^\star,u_j^\star) = &\int_{\Sigma}\sum_{i=1}^2|\sffM_M(e_i,\star\xi)|^2+|A_\Sigma(e_i,\star\xi)|^2\ dV_{\Sigma}\\
&\quad  - \int_\Sigma \left(\frac{|A_\Sigma|^2}{2}+\frac{\scalar_{M}}{2}+\frac{H_\Sigma^2}{2}\right)|\star\xi|^2\ dV_\Sigma.
\end{aligned}
\end{equation}

Now, whenever $\xi\neq 0$ we may pick $e_1=\frac{\xi}{|\xi|}, e_2=\frac{\star\xi}{|\star\xi|}$ as an orthonormal basis.
Recalling that $|\xi|=|\star\xi|$, we have at every point
\begin{equation}\label{keystep1}
\sum_{i=1}^2 |A_\Sigma(e_i,\xi)|^2+|A_\Sigma(e_i,\star\xi)|^2 = \sum_{i,j=1,2}|A_\Sigma(e_i,e_j)|^2|\xi|^2 = |A_\Sigma|^2|\xi|^2.
\end{equation}
From which we obtain, by adding $(\ref{plugin2})$ and $(\ref{dual})$,
\begin{equation*}
\begin{aligned}
\sum_{j=1}^{d}Q_\Sigma(u_j,u_j)+Q_\Sigma(u^\star_j,u^\star_j) = &\int_{\Sigma}\sum_{i=1}^2|\sffM_M(e_i,\xi)|^2+|\sffM_M(e_i,\star\xi)|^2\ dV_\Sigma\\
 &\quad -\int_{\Sigma}(R_{M} +H_\Sigma^2)|\xi|^2\ dV_\Sigma\nonumber.
\end{aligned}
\end{equation*}

For convenience, denote by $k=\#\{ \text{eigenvalues\ of\ $\tilde L_\Sigma$\ that\ are\ strictly\ smaller\ than}\ \eta\}$ and by $\tilde\phi_1,\cdots,\tilde\phi_k$ the eigenfunctions of the operator $\tilde L_\Sigma$ with respect to eigenvalues $\tilde \lambda_1\leq\cdots\leq\tilde\lambda_k<\eta$.
Consider following the linear map defined by
\begin{eqnarray*}
F: &\mathbb{V}^q\longrightarrow &\mathbb{R}^{2dk}\\
& \xi \longmapsto &[\int_\Sigma u_j\tilde \phi_\alpha\ dV_\Sigma, \int_\Sigma u^\star_j\tilde\phi_\alpha\ dV_\Sigma ],
\end{eqnarray*}
where $\alpha=1,\cdots,k$ and $j=1,\cdots,d$.
By Rank-Nullity theorem, if $2dk<q$, then there exists a nonzero harmonic vector field $\xi$ such that $u_j,u_j^\star$ are orthogonal to the first $k$ eigenfunctions of $\tilde L_\Sigma$.
From the Min-max principle for the operator $\tilde L_\Sigma$ it follows that
\begin{equation*}\sum_{j=1}^{d}Q_\Sigma(u_j,u_j)+Q_\Sigma(u^\star_j,u^\star_j)\geq 2\lambda_{k+1}\int_\Sigma|\xi|^2 \ dV_\Sigma\geq 2\eta\int_\Sigma |\xi|^2\ dV_\Sigma\end{equation*}
which contradicts the assumption in the proposition. Thus $2dk\geq q$, that is, $k\geq \frac{q}{2d}$ as claimed.
\end{proof}

We remark that the same proof works to estimate the number of eigenvalues of the Jacobi operator.
However, such estimate would not imply index estimates in the CMC sense.

As a corollary, letting $\eta=0$ we can get an estimate on numbers of negative eigenvalues of $\tilde L_\Sigma$, that is, an index estimate.
\begin{corollary}\label{index estimates}
Let $(M,g)$ be a $3$-dimensional Riemannian manifold without boundary isometrically embedded in $\real^d$ and $\Sigma$ a closed two-sided immersed CMC surface of genus $g$ in $M$.
Suppose that every non-zero $\xi\in \mathcal{H}^1(\Sigma)$ satisfies
\[\int_{\Sigma}\sum_{i=1}^2\left(|\sffM_M(e_i,\xi)|^2+|\sffM_M(e_i,\star\xi)|^2\right)-R_{M}|\xi|^2 \ dV_\Sigma<\int_\Sigma H_\Sigma^2|\xi|^2\ dV_\Sigma.\]
Then,
\begin{equation*}
\Index(\Sigma)\geq\frac{g}{d}
\end{equation*}
\end{corollary}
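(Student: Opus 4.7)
The plan is to obtain this corollary as a direct specialization of Theorem \ref{main theorem closed} with the threshold $\eta$ set to zero, applied to the maximal space of harmonic vector fields $\mathbb{V}^q = \harmonicvf(\Sigma)$. First I would observe that with $\eta=0$ the inequality $2\eta \int_{\Sigma}|\xi|^2\ dV_\Sigma$ on the right of the hypothesis of the theorem collapses, and after moving the $-H_\Sigma^2|\xi|^2$ term to the right-hand side one recovers exactly the hypothesis of the corollary. Thus every non-zero $\xi\in\harmonicvf(\Sigma)$ verifies the strict inequality required by Theorem \ref{main theorem closed} with $\eta=0$.

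Next I would invoke the identification made in the Preliminaries that, for a closed oriented surface of genus $g$, one has $\dim \harmonicvf(\Sigma)=2g$, so we may take $q=2g$. Feeding this into the theorem, we conclude
\[
\#\{\text{eigenvalues of }\tilde L_\Sigma\text{ strictly smaller than }0\}\geq \frac{2g}{2d}=\frac{g}{d}.
\]
Finally, I would recall that by the discussion surrounding \eqref{equations} the Morse index of the closed CMC surface $\Sigma$ (in the CMC sense, i.e.\ restricted to admissible variations) equals the number of negative eigenvalues of $\tilde L_\Sigma$ acting on admissible functions, so the inequality above is precisely $\Index(\Sigma)\geq g/d$.

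Since each of these steps is genuinely a citation or a rearrangement rather than a new computation, I do not anticipate any real obstacle: the only thing to double-check is the algebraic step rearranging the hypothesis (making sure the sign convention on $H_\Sigma^2$ and the factor of $2\eta$ line up correctly), and the consistent use of the fact that the index is measured against admissible test functions, which is exactly the class to which $\langle \xi,E_j\rangle$ and $\langle \star\xi,E_j\rangle$ belong by Lemma \ref{findtestfunctions}.
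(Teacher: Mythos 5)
Your proposal is correct and follows exactly the paper's own route: the paper deduces Corollary \ref{index estimates} by setting $\eta=0$ in Theorem \ref{main theorem closed}, taking $\mathbb{V}^q=\harmonicvf(\Sigma)$ with $q=2g$, and identifying the count of negative eigenvalues of $\tilde L_\Sigma$ with $\Index(\Sigma)$ via the discussion around \eqref{equations}. The rearrangement of the $H_\Sigma^2$ term and the factor $2\eta$ line up exactly as you describe, so there is nothing to add.
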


The next result is the corresponding theorem for free boundary CMC surfaces in $3$-dimensional manifolds with boundary.
From Remark $\ref{remark test functions}$ we observe $u_j^\star$ may not be admissible, however, it is possible to find suitable vector fields when the dimension of $\harmonicvfb(\Sigma,\partial \Sigma)$ is sufficiently large.
\begin{theorem}\label{main theorem boundary}
Let $(M,\partial M,g)$ be a $3$-dimensional Riemannian manifold with boundary isometrically embedded in $\real^d$ and $\Sigma$ a compact two-sided immersed free boundary CMC surface in $M$. 
Assume there exists a real number $\eta$ and a $q$-dimensional vector space $\mathbb{W}^q$ of harmonic vector fields on $\Sigma$ that are tangential on the boundary $\partial\Sigma$, such that any non-zero $\xi\in \mathbb{W}^q$ satisfies
\begin{equation*}
\begin{aligned}
\int_{\Sigma} & \sum_{i=1}^2|\sffM_M(e_i,\xi)|^2+|\sffM_M(e_i,\star\xi)|^2\ dV_\Sigma\\
& -\int_\Sigma (R_{M} +H_\Sigma^2)|\xi|^2\ dV_\Sigma -2\int_{\partial \Sigma}H_{\partial M}|\xi|^2 \ dV_{\partial \Sigma} <2\eta \int_{\Sigma}|\xi|^2\ dV_\Sigma.
\end{aligned}
\end{equation*}
Then
\[ \#  \{ \text{eigenvalues\ of\ $\tilde L_\Sigma$\ that\ are\ smaller\ than}\ \eta\} \geq \frac{q-d}{2d}.\]
\end{theorem}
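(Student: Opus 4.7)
The plan is to follow the template of Theorem \ref{main theorem closed}, with one crucial modification to handle the free boundary obstruction identified in Remark \ref{remark test functions}: if $\xi\in\harmonicvfb(\Sigma,\partial\Sigma)$ is tangential along $\partial\Sigma$, then $\star\xi$ is \emph{normal} to $\partial\Sigma$, so the coordinate functions $u_j^\star=\langle\star\xi,E_j\rangle$ need not have zero mean and are therefore not automatically admissible test functions for $\tilde L_\Sigma$. The idea is to absorb this defect by restricting to a subspace of $\mathbb{W}^q$ of codimension at most $d$ on which all $u_j^\star$ become admissible simultaneously.

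Concretely, I would first invoke Lemma \ref{findtestfunctions} to observe that $u_j=\langle\xi,E_j\rangle$ is admissible for every $\xi\in\mathbb{W}^q$, and then introduce the linear map
\[
G:\mathbb{W}^q\to\real^d,\qquad G(\xi)=\left(\int_{\Sigma}\langle\star\xi,E_j\rangle\,dV_\Sigma\right)_{j=1}^{d},
\]
whose kernel $\mathbb{W}'=\ker G$ has dimension at least $q-d$. By construction, for every $\xi\in\mathbb{W}'$ both families $\{u_j\}$ and $\{u_j^\star\}$ are admissible.

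Next comes the computation: for $\xi\in\mathbb{W}'$, apply Proposition \ref{calculation} to obtain $\sum_j Q_\Sigma(u_j,u_j)$, and then apply the same calculation to $\star\xi$; crucially, Remark \ref{remark1} ensures that the boundary identity $(\ref{meancurvature})$ still holds for $\star\xi$ even though it may not be tangential, so formula $(\ref{plugin})$ applies to $\star\xi$ as well. Summing the two identities, using $|\star\xi|=|\xi|$ and the pointwise cancellation from $(\ref{keystep1})$, namely $\sum_{i=1}^2(|A_\Sigma(e_i,\xi)|^2+|A_\Sigma(e_i,\star\xi)|^2)=|A_\Sigma|^2|\xi|^2$, which absorbs the $|A_\Sigma|^2/2$ terms, yields
\[
\begin{aligned}
\sum_{j=1}^{d}\bigl(Q_\Sigma(u_j,u_j)+Q_\Sigma(u_j^\star,u_j^\star)\bigr)
&=\int_{\Sigma}\sum_{i=1}^2\bigl(|\sffM_M(e_i,\xi)|^2+|\sffM_M(e_i,\star\xi)|^2\bigr)\,dV_\Sigma\\
&\quad-\int_{\Sigma}(R_M+H_\Sigma^2)|\xi|^2\,dV_\Sigma-2\int_{\partial\Sigma}H_{\partial M}|\xi|^2\,dV_{\partial\Sigma}.
\end{aligned}
\]

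Finally, I would conclude by the rank-nullity trick from Theorem \ref{main theorem closed}. Let $k$ denote the number of eigenvalues of $\tilde L_\Sigma$ strictly less than $\eta$, with admissible eigenfunctions $\tilde\phi_1,\dots,\tilde\phi_k$, and define
\[
F:\mathbb{W}'\to\real^{2dk},\qquad F(\xi)=\left(\int_\Sigma u_j\tilde\phi_\alpha\,dV_\Sigma,\ \int_\Sigma u_j^\star\tilde\phi_\alpha\,dV_\Sigma\right)_{1\le j\le d,\,1\le\alpha\le k}.
\]
Supposing $2dk<q-d$, we have $\dim\mathbb{W}'>2dk$, so some $0\neq\xi\in\ker F$ produces admissible $u_j$ and $u_j^\star$ orthogonal to each $\tilde\phi_\alpha$. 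The min-max characterization of $\tilde L_\Sigma$ then gives $Q_\Sigma(u_j,u_j)+Q_\Sigma(u_j^\star,u_j^\star)\ge\eta\int_\Sigma(u_j^2+(u_j^\star)^2)\,dV_\Sigma$; summing in $j$ and using the Euclidean identities $\sum_j u_j^2=|\xi|^2$ and $\sum_j(u_j^\star)^2=|\star\xi|^2=|\xi|^2$ contradicts the standing hypothesis. Hence $2dk\ge q-d$, i.e., $k\ge(q-d)/(2d)$. The main obstacle is exactly the failure of $u_j^\star$ to be admissible; the codimension-$d$ restriction forced by $G$ is what weakens the bound from $q/(2d)$ in the closed case to $(q-d)/(2d)$ here.
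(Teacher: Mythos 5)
Your proposal is correct and follows essentially the same argument as the paper: the same combined identity for $\sum_j Q_\Sigma(u_j,u_j)+Q_\Sigma(u_j^\star,u_j^\star)$ via Remark \ref{remark1} and $(\ref{keystep1})$, and the same rank-nullity/min-max contradiction. The only difference is cosmetic: you impose the $d$ admissibility conditions $\int_\Sigma u_j^\star\,dV_\Sigma=0$ first (passing to $\ker G$) and then map into $\real^{2dk}$, whereas the paper appends those $d$ conditions as extra components of a single linear map $\mathbb{W}^q\to\real^{2dk+d}$ — the two bookkeepings are equivalent.
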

\begin{proof}
From Remark $\ref{remark1}$ we know that
\begin{equation}\label{plugin3}
\begin{aligned}
\sum_{j=1}^{d}Q_\Sigma(u_j^\star,u_j^\star) = & \int_{\Sigma}\sum_{i=1}^2|\sffM_M(e_i,\star\xi)|^2+|A_\Sigma(e_i,\star\xi)|^2\ dV_{\Sigma}\\
&\quad  - \int_\Sigma \left(\frac{|A_\Sigma|^2}{2}+\frac{\scalar_{M}}{2}+\frac{H_\Sigma^2}{2}\right)|\xi|^2\ dV_\Sigma-\int_{\partial\Sigma}H_{\partial M}|\xi|^2\ dV_{\partial \Sigma}.
\end{aligned}
\end{equation}
Summing up $(\ref{plugin3})$ and $(\ref{plugin})$, together with $(\ref{keystep1})$ gives 
\begin{equation*}
\begin{aligned}
\sum_{j=1}^{d}Q_\Sigma(u_j,u_j)+Q_\Sigma(u^\star_j,u^\star_j) = &\int_{\Sigma}\sum_{i=1}^2|\sffM_M(e_i,\xi)|^2+|\sffM_M(e_i,\star\xi)|^2\ dV_\Sigma\\
 &\quad - \int_{\Sigma}(R_{M} +H_\Sigma^2)|\xi|^2\ dV_\Sigma\nonumber-2\int_{\partial \Sigma}H_{\partial M}|\xi|^2 \ dV_{\partial \Sigma}.
\end{aligned}
\end{equation*}
Let us denote by $k=\#\{ \text{eigenvalues\ of\ $\tilde L_\Sigma$\ that\ are\ smaller\ than}\ \eta\}$ and by $\tilde\phi_1,\cdots,\tilde\phi_k$ the eigenfunctions of $\tilde L_\Sigma$ with respect to eigenvalues $\tilde\lambda_1\leq\cdots\leq\tilde\lambda_k<\eta$.
Consider following the linear map defined by
\begin{eqnarray*}
F: &\mathbb{W}^q\longrightarrow &\mathbb{R}^{2dk+d}\\
& \xi \longmapsto &[\int_\Sigma u_j\phi_\alpha\ dV_\Sigma, \int_\Sigma u^\star_j\phi_\alpha\ dV_\Sigma,\int_{\Sigma} u_j^\star\ dV_\Sigma ],
\end{eqnarray*}
where $\alpha=1,\cdots,k$ and $j=1,\cdots,d$.
By Rank-Nullity theorem, if $2dk+d<q$, then there exists a nonzero harmonic tangential vector field $\xi$ such that $u_j,u_j^\star$ are orthogonal to the first $k$ eigenfunctions of $\tilde L_\Sigma$.
From the Min-max principle for $\tilde L_\Sigma$ it follows that
\[\sum_{j=1}^{d}Q_\Sigma(u_j,u_j)+Q_\Sigma(u^\star_j,u^\star_j)\geq 2\lambda_{k+1}\int_\Sigma|\xi|^2 \ dV_\Sigma\geq 2\eta\int_\Sigma |\xi|^2\ dV_\Sigma\]
which contradicts the assumption in the proposition. Thus $2dk+d\geq q$, that is, $k\geq \frac{q-d}{2d}$ as claimed.
\end{proof}

Correspondingly, we have an index estimate for free boundary CMC surface.
\begin{corollary}\label{index estimate free boundary}
Let $(M,\partial M,g)$ be a $3$-dimensional Riemannian manifold with boundary isometrically embedded in $\real^d$ and $\Sigma$ a compact two-sided immersed free boundary CMC surface in $M$ with genus $g$ and $r$ boundary components. 
Suppose that every non-zero $\xi\in \harmonicvfb(\Sigma,\partial \Sigma)$ satisfies
\begin{equation*}
\begin{aligned}
\int_{\Sigma} & \sum_{i=1}^2|\sffM_M(e_i,\xi)|^2+|\sffM_M(e_i,\star\xi)|^2\ dV_\Sigma\\
 & -\int_\Sigma R_{M} |\xi|^2\ dV_\Sigma -2\int_{\partial \Sigma}H_{\partial M}|\xi|^2 \ dV_{\partial \Sigma} < \int_\Sigma H_\Sigma^2|\xi|\ dV_\Sigma.
\end{aligned}
\end{equation*}
Then
\[\Index(\Sigma)\geq\frac{2g+r-1-d}{2d}.\]
\end{corollary}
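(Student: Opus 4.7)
The plan is to read this corollary as the specialization of Theorem \ref{main theorem boundary} to the threshold $\eta=0$ applied to the largest possible space of tangential harmonic vector fields, namely $\mathbb{W}^q = \harmonicvfb(\Sigma,\partial\Sigma)$ itself. The Morse index of $\Sigma$ equals the number of negative eigenvalues of $\tilde L_\Sigma$ on admissible functions, so by definition
\[\Index(\Sigma) = \#\{\text{eigenvalues of }\tilde L_\Sigma\text{ smaller than }0\}.\]
Thus taking $\eta=0$ in Theorem \ref{main theorem boundary} will produce an index estimate as soon as the hypothesis on $\mathbb{W}^q$ holds with $\eta=0$.

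First I would verify that the integral inequality stated in the corollary is exactly the inequality in Theorem \ref{main theorem boundary} specialized to $\eta=0$: the term $2\eta\int_\Sigma|\xi|^2\,dV_\Sigma$ on the right becomes $0$, and moving $\int_\Sigma H_\Sigma^2|\xi|^2\,dV_\Sigma$ to the right-hand side yields precisely the hypothesis in the corollary (interpreting $|\xi|$ as $|\xi|^2$, as the context of the squared-norm identity $|\star\xi|=|\xi|$ makes clear). Hence the space $\mathbb{W}^q = \harmonicvfb(\Sigma,\partial\Sigma)$ satisfies the hypothesis of the theorem with $\eta=0$.

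Next I would invoke the dimension count recorded in the preliminaries: by Hodge theory and Poincaré–Lefschetz duality,
\[\dim \harmonicvfb(\Sigma,\partial\Sigma) = 2g + r - 1,\]
so we may take $q = 2g+r-1$. Applying Theorem \ref{main theorem boundary} with this choice of $\mathbb{W}^q$ and $\eta=0$ yields
\[\Index(\Sigma) \;=\; \#\{\text{eigenvalues of }\tilde L_\Sigma\text{ smaller than }0\} \;\geq\; \frac{q-d}{2d} \;=\; \frac{2g+r-1-d}{2d},\]
which is the claimed bound.

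There is essentially no obstacle beyond bookkeeping: the main work has already been absorbed into Theorem \ref{main theorem boundary}, whose proof handles the production of a nonzero $\xi\in\mathbb{W}^q$ whose coordinate test functions $u_j,u_j^\star$ are admissible and orthogonal to the low eigenfunctions via the rank-nullity argument with the auxiliary map into $\mathbb{R}^{2dk+d}$ (the extra $d$ accounting for the non-admissibility of $u_j^\star$ a priori, cf.\ Remark \ref{remark test functions}). The only points worth double-checking are the sign conventions in the inequality and the fact that the mean-curvature term $H_\Sigma^2|\xi|^2$ on the right of the corollary's hypothesis absorbs the $-\int H_\Sigma^2|\xi|^2\,dV_\Sigma$ appearing in the theorem's hypothesis after moving it across the inequality.
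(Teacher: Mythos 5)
Your proposal is correct and matches the paper's (implicit) proof: the paper states this corollary immediately after Theorem \ref{main theorem boundary} precisely as the specialization $\eta=0$ with $\mathbb{W}^q=\harmonicvfb(\Sigma,\partial\Sigma)$, $q=\dim\harmonicvfb(\Sigma,\partial\Sigma)=2g+r-1$, using that $\Index(\Sigma)$ is the number of negative eigenvalues of $\tilde L_\Sigma$. You also correctly identified the typo $|\xi|$ for $|\xi|^2$ in the stated hypothesis.
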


\begin{remark}
As mentioned before, all of the results above have a corresponding statement when $\Sigma$ is one-sided.
The proof follows the same arguments by considering instead the two-sided double cover $\tilde\Sigma$ and $1$-forms that are anti-invariant with respect to changing sheets.
If we denote by $\tilde{g}$ the genus of $\tilde{\Sigma}$ and $r$ the number of ends of $\Sigma$, then the corresponding index lower bounds are $\frac{\tilde g}{2d}$ and $\frac{\tilde g + r- 1 -d}{2d}$ in the closed and free boundary case respectively.
\end{remark}

\section{Applications}
In this section we discuss some examples in which our index estimates for CMC surfaces apply.
On some special cases that can be canonically embedded in $\real^n$ we characterize its stable CMC surfaces for a range of values of the mean curvature.

Let us first discuss some applications in the case of closed CMC surfaces in a Riemannian manifold without boundary.

\begin{theorem}\label{closed cmc case}
Let $M$ be a Riemannian manifold of dimension $3$ immersed in $\real^d$ with second fundamental form $\sffM_M$ and scalar curvature $\scalar_M$.
If $\Sigma$ is a closed two-sided immersed CMC surface with mean curvature $H_\Sigma^2>\sup|\sffM_M|^2-\inf \scalar_M$ then it satisfies
\begin{equation*}
\Index(\Sigma)\geq\frac{g}{d}.
\end{equation*}
In particular, if $\Sigma$ is stable then it must be a sphere.
\end{theorem}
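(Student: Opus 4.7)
The plan is to reduce this statement to Corollary \ref{index estimates} by establishing a pointwise bound on the second-fundamental-form terms that appear in the hypothesis of that corollary.

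First, given any non-zero $\xi \in \mathcal{H}^1(\Sigma)$, choose the pointwise orthonormal frame $e_1 = \xi/|\xi|$, $e_2 = \star\xi/|\star\xi|$, which is legitimate because $|\xi| = |\star\xi|$. Using bilinearity of $\sffM_M$, we obtain at every point
\begin{equation*}
\sum_{i=1}^2 \left(|\sffM_M(e_i,\xi)|^2 + |\sffM_M(e_i,\star\xi)|^2\right) = \sum_{i,j=1}^{2} |\sffM_M(e_i,e_j)|^2\, |\xi|^2.
\end{equation*}
Completing $\{e_1,e_2\}$ to an orthonormal frame $\{e_1,e_2,N\}$ of $M$ along $\Sigma$, the right-hand side is bounded above by the full tensor norm $|\sffM_M|^2$ of the second fundamental form of $M$ in $\mathbb{R}^d$, so
\begin{equation*}
\sum_{i=1}^2 \left(|\sffM_M(e_i,\xi)|^2 + |\sffM_M(e_i,\star\xi)|^2\right) \leq |\sffM_M|^2\,|\xi|^2 \leq \sup_M |\sffM_M|^2\, |\xi|^2.
\end{equation*}

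Combining this with the trivial bound $-R_M|\xi|^2 \leq -(\inf_M R_M)|\xi|^2$ and integrating over $\Sigma$, the left-hand side in the hypothesis of Corollary \ref{index estimates} is dominated by
\begin{equation*}
\bigl(\sup |\sffM_M|^2 - \inf \scalar_M\bigr)\int_\Sigma |\xi|^2\, dV_\Sigma,
\end{equation*}
which, by the assumption $H_\Sigma^2 > \sup|\sffM_M|^2 - \inf \scalar_M$ together with the fact that $H_\Sigma$ is constant, is strictly less than $\int_\Sigma H_\Sigma^2|\xi|^2\, dV_\Sigma$. Therefore the hypothesis of Corollary \ref{index estimates} is satisfied for every non-zero $\xi \in \mathcal{H}^1(\Sigma)$, and the conclusion $\Index(\Sigma) \geq g/d$ follows directly.

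For the final claim, if $\Sigma$ is stable in the CMC sense then $\Index(\Sigma) = 0$, and the inequality $g/d \leq 0$ forces $g = 0$, i.e., $\Sigma$ is topologically a sphere. The main (really only) obstacle is the pointwise estimate in the first step; once one makes the judicious choice of frame aligned with $\xi$ and $\star\xi$, everything else is bookkeeping with sup/inf bounds and an application of the already established corollary.
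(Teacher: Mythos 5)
Your proof is correct and follows essentially the same route as the paper: the same frame $e_1 = \xi/|\xi|$, $e_2 = \star\xi/|\star\xi|$, the same pointwise identity and bound $\sum_{i,j}|\sffM_M(e_i,e_j)|^2 \leq |\sffM_M|^2$, and the same reduction to Corollary \ref{index estimates}. The only (harmless) additions are your explicit remark on completing the frame with $N$ and your spelled-out deduction that stability forces $g=0$.
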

\begin{proof}
Whenever $\xi\neq 0$ we may pick $e_1=\frac{\xi}{|\xi|}, e_2=\frac{\star\xi}{|\star\xi|}$ as an orthonormal basis.
Recalling that $|\xi|=|\star\xi|$, we have
\begin{equation*}
\sum_{i=1}^2|\sffM_M(e_i,\xi)|^2+|\sffM_M(e_i,\star\xi)|^2 = \sum_{i,j=1,2}|\sffM_M(e_i,e_j)|^2|\xi|^2
\end{equation*}
at every point.
Since $\sum_{i,j=1,2}|\sffM_M(e_i,e_j)|^2\leq |\sffM_M|^2$ we obtain
\begin{equation*}
\int_{\Sigma}\sum_{i=1}^2\left(|\sffM_M(e_i,\xi)|^2+|\sffM_M(e_i,\star\xi)|^2\right)-\scalar_{M}|\xi|^2 \ dV_\Sigma \leq\left(\sup|\sffM_M|^2-\inf \scalar_M\right)\int_\Sigma|\xi|^2 dV_\Sigma.
\end{equation*}
As long as $H_\Sigma^2>\sup|\sffM_M|^2-\inf \scalar_M$ the condition of Corollary \ref{index estimates} is satisfied.
\end{proof}

The following is a direct application of Nash's embedding theorem and the Theorem above.

\begin{corollary}
Let $M$ be a closed Riemannian manifold of dimension $3$. 
There exist constants $C>0$ and $H_0\geq 0$ depending on $M$ such that every closed two-sided immersed CMC surface $\Sigma$ of genus $g$ in $M$ and mean curvature $|H_\Sigma|>H_0$ satisfy
\begin{equation*}
\Index(\Sigma)\geq Cg
\end{equation*}
In particular, if $\Sigma$ is stable then it must be a sphere.
\end{corollary}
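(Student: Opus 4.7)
The plan is to reduce this statement directly to Theorem \ref{closed cmc case} by extracting the two constants $d$ and $H_0$ from the geometry of $M$ alone.

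First I would invoke Nash's embedding theorem to isometrically embed $M$ into some Euclidean space $\real^d$, where the dimension $d$ depends only on $M$ (not on $\Sigma$). Because $M$ is closed, both its second fundamental form $\sffM_M$ in $\real^d$ and its scalar curvature $\scalar_M$ are continuous functions on the compact manifold $M$, and hence bounded. Define
\[
H_0 := \sqrt{\max\bigl(0,\ \sup_M |\sffM_M|^2 - \inf_M \scalar_M\bigr)} \qquad \text{and} \qquad C := \frac{1}{d}.
\]
Both depend only on the embedding of $M$ into $\real^d$, i.e.\ only on $M$.

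Next, for any closed two-sided immersed CMC surface $\Sigma$ of genus $g$ in $M$ with $|H_\Sigma| > H_0$, we have $H_\Sigma^2 > \sup_M|\sffM_M|^2 - \inf_M \scalar_M$ pointwise. Hence Theorem \ref{closed cmc case} applies to $\Sigma$, yielding
\[
\Index(\Sigma) \geq \frac{g}{d} = Cg.
\]
This gives the desired linear lower bound.

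Finally, for the stability statement: if $\Sigma$ is stable in the CMC sense, then $\Index(\Sigma) = 0$, so the inequality $\Index(\Sigma) \geq Cg$ forces $g = 0$, i.e.\ $\Sigma$ is a sphere. There is no real obstacle in this argument; the only point requiring care is observing that the relevant geometric quantities of $M$ (and of its Nash embedding) are bounded because $M$ itself is compact, which is what makes the extraction of uniform constants $C$ and $H_0$ possible.
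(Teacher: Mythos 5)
Your proposal is correct and matches the paper's intended argument: the paper itself presents this corollary as ``a direct application of Nash's embedding theorem and the Theorem above,'' which is precisely your reduction --- fix a Nash embedding $M \subset \real^d$, use compactness of $M$ to bound $\sup_M|\sffM_M|^2 - \inf_M \scalar_M$, set $H_0$ and $C = 1/d$ accordingly, and apply Theorem \ref{closed cmc case}. Your explicit handling of the case $\sup_M|\sffM_M|^2 - \inf_M \scalar_M \leq 0$ via the maximum with $0$ is a nice touch that the paper leaves implicit.
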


\begin{remark}
The result above proves the CMC version of Schoen and Marques-Neves' conjecture for minimal surfaces for sufficiently large mean curvature in arbitrary $3$-manifolds.
\end{remark}

First we would like to mention that Barbosa-do Carmo-Eschenburg \cite{barbosa-docarmo-eschenburg1988} have studied CMC hypersurfaces in simply connected spaces of constant sectional curvature and proved that geodesic spheres are the only stable ones.
In \cite{cavalcante-oliveira2017:arxiv} Cavalcante-de Oliveira have proved this using similar index estimates.
Let us now present some examples that satisfy the conditions of the above results using canonical embeddings.

\hfill

\subsection*{Closed CMC surfaces in $S^2\times \real$.}
\hfill

In \cite{souam2010} Souam proved that every stable CMC in $S^2\times \real$ is a rotational sphere.
We are going to give an alternative proof using our index estimates.

\begin{theorem}[{\cite[4.1]{souam2010}}]
Let $\Sigma$ be a closed two-sided immersed CMC surface in $S^2(r)\times\real$.
Then,
\begin{equation*}
\Index(\Sigma)\geq\frac{g}{4}.
\end{equation*}
In particular, if in addition $\Sigma$ is stable, then it is a sphere.
\end{theorem}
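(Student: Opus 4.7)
The plan is to apply Corollary \ref{index estimates} to the canonical isometric embedding $M = S^2(r) \times \real \hookrightarrow \real^3 \times \real = \real^4$, so that $d = 4$. Two ambient invariants enter the hypothesis: the scalar curvature $\scalar_M$ and the extrinsic second fundamental form $\sffM_M$ of $M$ in $\real^4$. Since the $\real$-factor is flat and $S^2(r) \times \mathbb{R}$ is a Riemannian product, $\scalar_M = 2/r^2$, and writing $X^S$ for the projection of $X \in TM$ onto $TS^2(r)$ and $\nu$ for the outward $\real^3$-normal of $S^2(r)$, we have $\sffM_M(X,Y) = -\frac{1}{r} g(X^S, Y^S)\,\nu$; in particular $|\sffM_M|^2 \equiv 2/r^2$.

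For any nonzero $\xi \in \harmonicvf(\Sigma)$, following the same device as in the proof of Theorem \ref{main theorem closed}, I would take the orthonormal frame $e_1 = \xi/|\xi|$, $e_2 = \star\xi/|\star\xi|$ on $\Sigma$. Let $T$ be the unit vector field along the $\real$-direction and $\alpha_i = g(e_i, T)$. Decomposing $e_i = e_i^S + \alpha_i T$, orthogonality of $T$ to $TS^2(r)$ gives $g(e_i^S, e_j^S) = \delta_{ij} - \alpha_i \alpha_j$, hence
\begin{equation*}
\sum_{i,j=1}^{2} |\sffM_M(e_i, e_j)|^2 = \frac{1}{r^2} \sum_{i,j=1}^{2}(\delta_{ij} - \alpha_i \alpha_j)^2 = \frac{1 + (1-s)^2}{r^2},
\end{equation*}
where $s = \alpha_1^2 + \alpha_2^2 = 1 - g(N,T)^2 \in [0,1]$. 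Using $|\xi|=|\star\xi|$ exactly as in (\ref{keystep1}), this yields the pointwise identity
\begin{equation*}
\sum_{i=1}^{2}\bigl(|\sffM_M(e_i,\xi)|^2 + |\sffM_M(e_i,\star\xi)|^2\bigr) - \scalar_M |\xi|^2 = -\frac{s(2-s)}{r^2}|\xi|^2 \leq 0.
\end{equation*}

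The proof then splits into two cases. If $\Sigma$ is not a horizontal slice, then $s$ is positive on an open subset, so after integration the above inequality is strict:
\begin{equation*}
\int_\Sigma \sum_{i=1}^{2}\bigl(|\sffM_M(e_i,\xi)|^2 + |\sffM_M(e_i,\star\xi)|^2\bigr) - \scalar_M |\xi|^2 \, dV_\Sigma < 0 \leq \int_\Sigma H_\Sigma^2 |\xi|^2 \, dV_\Sigma,
\end{equation*}
and Corollary \ref{index estimates} with $d = 4$ delivers $\Index(\Sigma) \geq g/4$. If instead $\Sigma = S^2(r) \times \{t\}$ is a horizontal slice, then $\Sigma$ is totally geodesic (fixed by the reflection across the slice), so $g = 0$ and the estimate is trivial. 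For the second assertion, stability forces $\Index(\Sigma) = 0$, so $g = 0$ and $\Sigma$ is topologically a sphere.

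The only real calculation is the algebraic identity $\sum_{i,j}(\delta_{ij} - \alpha_i\alpha_j)^2 = 1 + (1-s)^2$ and the verification $1 + (1-s)^2 \leq 2 = r^2 \scalar_M$, which is where the favorable pointwise sign comes from. Splitting off the horizontal-slice case is essential because equality occurs precisely there, whereas Corollary \ref{index estimates} requires a strict inequality.
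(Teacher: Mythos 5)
Your proof is correct, and while it rests on the same pillars as the paper's — the canonical embedding $S^2(r)\times\real\subset\real^4$ with $|\sffM_M|^2=\scalar_M=2/r^2$, fed into Corollary \ref{index estimates} — it handles the delicate equality case by a genuinely different route. The paper only uses the crude bound $\sum_{i,j}|\sffM_M(e_i,e_j)|^2\leq|\sffM_M|^2$, which here gives a non-strict inequality; it therefore gets the estimate from Theorem \ref{closed cmc case} only when $H_\Sigma\neq 0$, and disposes of the case $H_\Sigma=0$ by an external geometric input: the maximum principle (moving spherical slices) shows that the only closed minimal surfaces in $S^2(r)\times\real$ are the slices $S^2(r)\times\{t\}$. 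You instead compute the trace term exactly, $\sum_{i,j}|\sffM_M(e_i,e_j)|^2=\frac{1+(1-s)^2}{r^2}$ with $s=1-g(N,T)^2\in[0,1]$, so the deficit against $\scalar_M|\xi|^2$ is $-\frac{s(2-s)}{r^2}|\xi|^2\leq 0$, with equality exactly where $N=\pm T$; since $s\equiv 0$ forces the height function to be constant and hence $\Sigma$ to be a slice, strictness holds for every non-slice surface, minimal or not, and slices have genus zero so the estimate is vacuous there. Your version thus trades the maximum-principle classification of closed minimal surfaces for a sharper algebraic identity, which makes the argument more self-contained (and exhibits precisely where equality can occur). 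One point you leave implicit: to conclude that $\int_\Sigma -\frac{s(2-s)}{r^2}|\xi|^2\,dV_\Sigma<0$ for \emph{every} nonzero $\xi\in\harmonicvf(\Sigma)$, you need $|\xi|$ to be nonzero somewhere on the open set where $s>0$; this is true because a nonzero harmonic vector field on a closed surface has only isolated zeros (it is locally the real part of a holomorphic object, so unique continuation applies), but it deserves an explicit sentence.
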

\begin{proof}
Consider the canonical embedding of $S^2(r)\times\real $ in $\real^4$.
Its second fundamental form is given by $\sffM_{S^2(r)\times\real}(X,Y)=\frac{1}{r}\langle {\pi}_*X,{\pi}_*Y\rangle$, where $\pi$ denotes the projection onto $S^2(r)$.
In particular, $|\sffM_{S^2(r)\times\real}|^2 =\frac{2}{r^2}$.
Since $\scalar_{S^2(r)\times \real}=\frac{2}{r^2}$, it follows from Theorem \ref{closed cmc case} that $\Index(\Sigma)\geq\frac{g}{4}$ whenever $H_\Sigma\neq 0$.
It remains to consider the case $H_\Sigma=0$.
However, by moving spherical slices, it follows from the maximum principle that the only closed minimal surfaces in are slices $S^2(r)\times\{t\}$ thus finishing the proof.
\end{proof}

\subsection*{Closed CMC surfaces in $T^2(\alpha,\beta)\times\real$.}
\hfill

Let $T^2(\alpha,\beta)$ denote the $2$-torus defined by the quotient of $\real^2$ with respect to the lattice $\Gamma(\alpha,\beta)$ generated by $\{(0,1),(\alpha,\beta)\}$ satisfying $0\leq\alpha\leq\frac{1}{2}$, $\beta>0$ and $\alpha^2+\beta^2\geq 1$.
We remark that any flat torus is isometric, up to dilations, to the above.

Let us consider the following map $f:\real^2\rightarrow \real^6$, $f=(f_1,f_2,f_3,f_4,f_5,f_6)$ defined by
\begin{equation*}
   \begin{aligned}
      f_1 & = C_1\cos(a_1 u + b_1 v) \\
      f_2 & = C_1\sin(a_1 u + b_1 v) \\
      f_3 & = C_2\cos(a_2 u + b_2 v) \\
      f_4 & = C_2\sin(a_2 u + b_2 v) \\
      f_5 & = C_3\cos(b_3 v) \\
      f_6 & = C_3\sin(b_3 v).
   \end{aligned}
\end{equation*}
If we choose $a_i=2\pi k_i$, $b_i=\frac{2\pi}{\sqrt{\alpha^2+\beta^2}}\left(l_i-k_i\frac{\alpha}{\beta}\sqrt{\alpha^2+\beta^2}\right)$ for positive integers $k_i,l_i$, i=1,2 and $b_3=\frac{2\pi}{\sqrt{\alpha^2+\beta^2}}$ then $f$ becomes an embedding of $T^2(\alpha,\beta)$ into $\real^6$.
Furthermore, as long as the $l_i,k_i$ satisfy
\begin{equation*}
   \begin{aligned}
      \frac{\alpha}{\beta}\sqrt{\alpha^2+\beta^2} < & \frac{l_1}{k_1} < \left(\frac{\alpha}{\beta}+1\right)\sqrt{\alpha^2+\beta^2}\\
      \left(\frac{\alpha}{\beta}-1\right)\sqrt{\alpha^2+\beta^2} < & \frac{l_1}{k_1} < \frac{\alpha}{\beta}\sqrt{\alpha^2+\beta^2},
   \end{aligned}
\end{equation*}
it is always possible to find positive constants $C_1,C_2,C_3$ so that $f$ is an isometric embedding (see \cite{kantor-frangulov1984}).

\begin{theorem}
Let $k_i,l_i$, $i=1,2$ be positive integers and $C_1,C_2,C_3>0$ so that $f$ defined above is an isometric embedding of $T^2(\alpha,\beta)$ into $\real^6$.
If $\Sigma$ is closed two-sided immersed CMC surface of genus $g$ in $T^2(\alpha,\beta)\times\real$ with mean curvature $H_\Sigma^2>C_1^2(a_1^2+b_1^2)^2+C_2^2(a_2^2+b_2^2)^2 + C_3^2b_3^4$ then
\begin{equation*}
\Index(\Sigma)\geq\frac{g}{7}.
\end{equation*}
In particular, if in addition $\Sigma$ is stable, then it is a sphere.
\end{theorem}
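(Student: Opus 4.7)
The plan is to apply Theorem \ref{closed cmc case} with $d=7$, after isometrically embedding $T^2(\alpha,\beta)\times\real$ into $\real^7$ and computing $|\sffM_M|^2$ explicitly. Extend $f$ to an embedding $\tilde{f}:T^2(\alpha,\beta)\times\real\to\real^7$ by setting $\tilde{f}(u,v,t)=(f(u,v),t)$. Since the constants $C_i$, $a_i$, $b_i$ are chosen so that $f$ is an isometric embedding of the flat torus, $\tilde{f}$ is automatically an isometric embedding of the flat $3$-manifold $T^2(\alpha,\beta)\times\real$.

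Next, I would compute the second fundamental form $\sffM_M$ of $M=T^2(\alpha,\beta)\times\real$ in $\real^7$. Because $M$ is flat and $(u,v,t)$ are global orthonormal coordinates, $\nabla^M_{\partial_i}\partial_j=0$ for $i,j\in\{u,v,t\}$, so $\sffM_M(\partial_i,\partial_j)=\partial_i\partial_j \tilde{f}$. The $t$-coordinate is linear in $\tilde{f}$, hence $\sffM_M(\partial_t,\cdot)=0$, and the only surviving terms come from second derivatives of $f$ in $(u,v)$. A direct calculation gives
\begin{align*}
|\partial_u^2 f|^2 &= C_1^2 a_1^4+C_2^2 a_2^4,\\
|\partial_v^2 f|^2 &= C_1^2 b_1^4+C_2^2 b_2^4+C_3^2 b_3^4,\\
|\partial_u\partial_v f|^2 &= C_1^2 a_1^2 b_1^2+C_2^2 a_2^2 b_2^2,
\end{align*}
so that, at every point of $M$,
\[
|\sffM_M|^2=|\partial_u^2 f|^2+2|\partial_u\partial_v f|^2+|\partial_v^2 f|^2=C_1^2(a_1^2+b_1^2)^2+C_2^2(a_2^2+b_2^2)^2+C_3^2 b_3^4.
\]

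Since $M$ is flat, $\scalar_M\equiv 0$, so $\sup|\sffM_M|^2-\inf\scalar_M$ equals the right-hand side of the hypothesis on $H_\Sigma^2$. Hence Theorem \ref{closed cmc case} applied with $d=7$ yields $\Index(\Sigma)\geq g/7$. For the final assertion, a stable $\Sigma$ satisfies $\Index(\Sigma)=0$, so $g=0$ and $\Sigma$ is a topological sphere.

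The only nontrivial step is the computation of $|\sffM_M|^2$ and verifying that the $\real$-direction contributes nothing; this is routine once one notes that $(u,v,t)$ form a flat orthonormal frame on $M$. Everything else is a direct invocation of Theorem \ref{closed cmc case}, with the key observation that the bound on $|\sffM_M|^2$ is a pointwise constant (not merely a supremum), so the strict inequality in the hypothesis matches the strict inequality required by Corollary \ref{index estimates}.
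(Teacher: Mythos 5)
Your proposal is correct, and its overall skeleton is the same as the paper's: embed $T^2(\alpha,\beta)\times\real$ into $\real^7$ via $f\times\mathrm{id}$, compute $|\sffM_M|^2$, note $\scalar_M=0$, and invoke Theorem \ref{closed cmc case} with $d=7$. Where you genuinely diverge is in how $|\sffM_M|^2$ is computed. The paper exhibits an explicit orthonormal normal frame $N_1,\dots,N_4$ along $M$ in $\real^7$ (including a rather involved fourth normal vector), computes the associated shape operators, finds $S_4=0$, and sums $|S_1|^2+|S_2|^2+|S_3|^2$. You instead exploit flatness: since $(u,v,t)$ are global flat coordinates for the quotient metric and the embedding is isometric, all Christoffel symbols of $\connM$ vanish, so $\sffM_M(\partial_i,\partial_j)=\partial_i\partial_j\tilde f$ with no tangential projection needed, and $|\sffM_M|^2$ is just the sum of squared norms of the coordinate Hessian entries, the mixed term counted twice. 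Your three displayed norms are correct, and they sum to $C_1^2(a_1^2+b_1^2)^2+C_2^2(a_2^2+b_2^2)^2+C_3^2b_3^4$, matching the paper exactly. Your route is more elementary and sidesteps the most laborious part of the paper's proof (constructing and verifying the normal frame, especially $N_4$); the paper's route yields slightly more information, namely the individual shape operators in each normal direction. Both arguments close identically: the bound is a pointwise constant, so the hypothesis of Theorem \ref{closed cmc case} holds verbatim, giving $\Index(\Sigma)\geq g/7$, and stability forces $g=0$, i.e.\ $\Sigma$ is a sphere.
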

\begin{proof}
We embed $T^2(\alpha,\beta)\times\real$ in $\real^7$ using $f\times id$ and choose the following normal frame:
\begin{equation*}
   \begin{aligned}
      N_1 & = (-\cos(a_1 u+ b_1 v), -\sin(a_1 u + b_1 v), 0, 0, 0, 0, 0)\\
      N_2 & = (0, 0, -\cos(a_2 u+ b_2 v), -\sin(a_2 u + b_2 v), 0, 0, 0)\\
      N_3 & = (0, 0, 0, 0, -\cos(b_3 v), -\sin(b_3 v), 0)\\
      N_4 & = M \bigg{(} a_2C_2\sin(a_1u+b_1v),-a_2C_2\cos(a_1u+b_1v), -a_1C_1\sin(a_2u+b_2v), \\ 
          & a_1C_1\cos(a_2u+b_2v),
                \left.  -\frac{(a_2b_1-a_1b_2)C_1C_2}{b_3C_3}\sin(b_3 v), \frac{(a_2b_1-a_1b_2)C_1C_2}{b_3C_3}\cos(b_3 v), 0\right)
   \end{aligned}
\end{equation*}
with $M = \frac{1}{a_1^2C_1^2+a_2^2C_2^2+\frac{C_1^2C_2^2}{b_3^2C_3^2}(a_2b_1-a_1b_2)^2}$.

The shape operators for each normal direction and basis $\frac{\partial f}{\partial u}, \frac{\partial f}{\partial v}$ are given by
\begin{equation*}
   S_1= C_1\left(
   \begin{aligned}
   a_1^2 \quad a_1b_1\\
   a_1b_1 \quad b_1^2
   \end{aligned}\right),
   S_2= C_2\left(
   \begin{aligned}
   a_2^2 \quad a_2b_2\\
   a_2b_2 \quad b_2^2
   \end{aligned}\right),
   S_3= C_3\left(
   \begin{aligned}
   0 \quad\, 0\\
   0 \quad b_3^2
   \end{aligned}\right),
   S_4 = 0.   
\end{equation*}
So, we can compute $|S_1|^2 = C_1^2(a_1^2+b_1^2)^2$, $|S_2|^2 = C_2^2(a_2^2+b_2^2)^2$ and $|S_3|^2=C_3^2b_3^4$.
From which follows that
\begin{equation*}
|\sffM_{T^2\times\real}|^2 = C_1^2(a_1^2+b_1^2)^2 + C_2^2(a_2^2+b_2^2)^2 + C_3^2b_3^4.
\end{equation*}
Since $\scalar_{T^2\times\real}=0$, the result follows from Theorem \ref{closed cmc case}.
\end{proof}

In \cite{hauswirth-perez-Romon-ros2004} Hauswirth-Perez-Romon-Ros study doubly periodic isoperimetric surfaces and conjecture that the isoperimetric profile of $T^2(\alpha,\beta)\times \real$ is given by spheres, cylinders around lines and pairs of planes.
The authors then prove the conjecture for sufficiently large values of $\beta$.
The result above provides support for the spherical part of the isoperimetric profile conjecture for any given values of $\alpha,\beta$.
It also says that any genus $1$ isoperimetric surface, hence stable CMC, has a computable upper bound on the mean curvature, thus placing it further in the isoperimetric profile.

We would like to highlight two special cases.
Firstly, the hexagonal torus $T^2(\frac{1}{2},\frac{\sqrt{3}}{2})$ which is a critical case in Hauswirth-Perez-Romon-Ros' proof.
In this case we may choose, for example, $a_1=b_3=2\pi$, $a_2=4\pi$, $b_1=2\pi(1-\frac{1}{\sqrt{3}})$ and $b_2=2\pi(1-\frac{2}{\sqrt{3}})$.
Then, to make $f$ an isometric embedding we solve an underdetermined linear system so we may pick $C_1^2=1$, $C_2^2=\frac{1}{2}(\sqrt{3}-1)$ and $C_3^2=\frac{11+2\sqrt{3}}{6}$.
However, these are not unique choices and are far from being optimal in the sense that the lower bound for $H_\Sigma$ on the result above is minimal.

Secondly, the case of $\alpha=0$, that is, rectangular tori.
Rectangular tori can be embedded in $\real^4$ canonically and the estimate on $H_\Sigma$ is better.

\begin{corollary}
Let $T^2=T^2(0,\beta)$, $\beta\geq 1$, be a rectangular torus.
If $\Sigma$ is a closed two-sided immersed CMC surface of genus $g$ in $T^2\times \real$ with mean curvature $H_\Sigma^2>(2\pi)^2\left(1+\frac{1}{\beta^2}\right)$, then
\begin{equation*}
\Index(\Sigma)\geq \frac{g}{5}.
\end{equation*}
In particular, if in addition $\Sigma$ is stable, then it is a sphere.
\end{corollary}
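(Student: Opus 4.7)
The plan is to apply Theorem~\ref{closed cmc case} directly to a lower dimensional canonical embedding of $T^2(0,\beta)\times\real$ into $\real^5$, bypassing the $\real^7$ embedding used in the previous theorem. The saving of two coordinates arises because a rectangular torus embeds isometrically in $\real^4$ as a Riemannian product of two circles, whereas the general $T^2(\alpha,\beta)$ required a skew embedding in $\real^6$.

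First I would define $f:T^2(0,\beta)\to\real^4$ by
\[
f(u,v) = \left(\frac{1}{2\pi}\cos(2\pi u),\ \frac{1}{2\pi}\sin(2\pi u),\ \frac{\beta}{2\pi}\cos\frac{2\pi v}{\beta},\ \frac{\beta}{2\pi}\sin\frac{2\pi v}{\beta}\right),
\]
and check directly that $\partial_u f$ and $\partial_v f$ form an orthonormal frame at every point, so $f$ is an isometric immersion that descends to the rectangular lattice generated by $(1,0)$ and $(0,\beta)$. Composing with the identity on the $\real$-factor produces an isometric embedding of $T^2(0,\beta)\times\real$ into $\real^5$, so one may take $d=5$ in Theorem~\ref{closed cmc case}.

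Next I would compute $|\sffM_{T^2\times\real}|^2$. The image is the Riemannian product of a circle of radius $\tfrac{1}{2\pi}$, a circle of radius $\tfrac{\beta}{2\pi}$, and a line, sitting inside $\real^5$. The two circular factors contribute principal curvatures of magnitude $2\pi$ and $\tfrac{2\pi}{\beta}$ along independent unit normals, and the $\real$-factor contributes nothing, so
\[
|\sffM_{T^2\times\real}|^2 \;=\; (2\pi)^2 + \frac{(2\pi)^2}{\beta^2} \;=\; (2\pi)^2\!\left(1+\frac{1}{\beta^2}\right),
\]
a constant on the ambient manifold. Because $T^2\times\real$ is flat, $\scalar_M\equiv 0$.

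Finally, the pinching hypothesis $H_\Sigma^2 > (2\pi)^2(1+\tfrac{1}{\beta^2})$ is exactly the condition $H_\Sigma^2 > \sup|\sffM_M|^2-\inf\scalar_M$ of Theorem~\ref{closed cmc case}, so the theorem applies with $d=5$ and yields $\Index(\Sigma)\geq g/5$. If in addition $\Sigma$ is stable then $g=0$, so $\Sigma$ must be a topological sphere. The only nontrivial steps are the isometry check for $f$ and the shape operator computation, both routine; I do not see any essential obstacle beyond confirming that the rectangular case genuinely drops the embedding dimension from $7$ to $5$.
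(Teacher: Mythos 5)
Your proposal is correct and follows essentially the same route as the paper: the identical canonical embedding of $T^2(0,\beta)\times\real$ into $\real^5$ (product of two round circles of radii $\tfrac{1}{2\pi}$ and $\tfrac{\beta}{2\pi}$ with a line), the same computation $|\sffM_{T^2\times\real}|^2=(2\pi)^2\left(1+\tfrac{1}{\beta^2}\right)$ with vanishing scalar curvature, and the same application of Theorem \ref{closed cmc case} with $d=5$. The only difference is that you spell out the isometry check and the periodicity of $f$, which the paper leaves implicit.
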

\begin{proof}
In the case of a rectangular tori the embedding reduces to
\begin{equation*}
   \begin{aligned}
      f_1 & = \frac{1}{2\pi}\cos(2\pi u) \\
      f_2 & = \frac{1}{2\pi}\sin(2\pi u) \\
      f_3 & = \frac{\beta}{2\pi}\cos(\frac{2\pi}{\beta} v) \\
      f_4 & = \frac{\beta}{2\pi}\sin(\frac{2\pi}{\beta} v).
   \end{aligned}
\end{equation*}
We then embed $T^2\times\real$ in $\real^5$ and using the canonical normal frame the shape operators satisfy $|S_1|^2=(2\pi)^2$ and $|S_2|^2=\left(\frac{2\pi}{\beta}\right)^2$.
From which follows that
\begin{equation*}
|\sffM_{T^2\times\real}|^2 = (2\pi)^2\left(1+\frac{1}{\beta^2}\right).
\end{equation*}
Since the scalar curvature is zero, the result follows from Theorem \ref{closed cmc case}.
\end{proof}

\subsection*{Closed CMC surfaces in $T^3$.}
\hfill

Let us discuss the case of triply periodic CMC surfaces in $\real^3$.
Similarly to the doubly periodic case, Hauswirth-Perez-Romon-Ros conjecture that the isoperimetric profile on $T^3$ is given only by spheres, cylinders around lines and pairs of planes.
The following result supports the conjecture on the spherical branch in the same manner as above.
Any rectangular tori is equivalent, up to dilations, to the quotient of $\real^3$ by the lattice generated by $\{(1,0,0), (0,A,0), (0,0,B)\}$.
We identify such tori with $S^1(1)\times S^1(r_1)\times S^1(r_2)$ with $r_1=\frac{2\pi}{A}$ and $r_2=\frac{2\pi}{B}$.

\begin{theorem}
Let $T^3=S^1(1)\times S^1(r_1)\times S^1(r_2)$ be a rectangular $3$-torus.
If $\Sigma$ is a closed two-sided immersed CMC surface of genus $g$ in $T^3$ with mean curvature $H_\Sigma^2>1+\frac{1}{r_1^2}+\frac{1}{r_2^2}$, then 
\begin{equation*}
\Index(\Sigma)\geq \frac{g}{6}.
\end{equation*}
In particular, if in addition $\Sigma$ is stable, then it is a sphere.
\end{theorem}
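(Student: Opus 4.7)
The plan is to apply Theorem~\ref{closed cmc case} directly, in the same manner as the previous corollaries for $T^2(\alpha,\beta)\times\real$ and the rectangular subcase. Concretely, I would use the canonical isometric product embedding
\[
T^3 = S^1(1)\times S^1(r_1)\times S^1(r_2) \hookrightarrow \real^2\times\real^2\times\real^2 = \real^6,
\]
so $d=6$. Since $T^3$ is flat, $\scalar_{T^3}\equiv 0$, and the only nontrivial piece of geometric data needed to invoke Theorem~\ref{closed cmc case} is $\sup|\sffM_{T^3}|^2$.

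To compute this, I would choose unit tangent vectors $e_1,e_2,e_3$ along the three circle factors. Because the three $\real^2$ summands are mutually orthogonal in $\real^6$, the normal bundle splits as the direct sum of the normal bundles of the individual circles, and the second fundamental form decomposes block-diagonally: $\sffM_{T^3}(e_i,e_j)=0$ for $i\neq j$, while $|\sffM_{T^3}(e_i,e_i)|$ equals the geodesic curvature $1,\ 1/r_1,\ 1/r_2$ of the corresponding circle in its $\real^2$ factor. Summing the squares,
\[
|\sffM_{T^3}|^2 \;=\; 1+\frac{1}{r_1^2}+\frac{1}{r_2^2},
\]
a global constant by homogeneity.

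Then the hypothesis $H_\Sigma^2 > 1+1/r_1^2+1/r_2^2$ is exactly $H_\Sigma^2 > \sup|\sffM_{T^3}|^2-\inf \scalar_{T^3}$, so Theorem~\ref{closed cmc case} applies with $d=6$ and yields $\Index(\Sigma)\geq g/6$. The ``in particular'' statement follows immediately: if $\Sigma$ is stable then $\Index(\Sigma)=0$, forcing $g=0$, so $\Sigma$ is a topological sphere (and by Hopf's theorem applied in a simply connected chart of $T^3$, a round one).

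There is essentially no obstacle: once the canonical product embedding is written down, flatness of $T^3$ and the product structure make all the curvature quantities trivial to read off, and the conclusion is a black-box application of Theorem~\ref{closed cmc case}. The only points that deserve a careful sentence or two in the writeup are the orthogonal splitting of the normal $\real^2$-planes in $\real^6$ (which is why the cross shape operators vanish and $|\sffM_{T^3}|^2$ is a pure sum) and the identification of the threshold $1+1/r_1^2+1/r_2^2$ as $\sup|\sffM_{T^3}|^2$.
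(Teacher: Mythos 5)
Your proposal is correct and follows essentially the same route as the paper: the canonical product embedding $T^3\hookrightarrow\real^6$, the block-diagonal second fundamental form with shape operators of squared norm $1$, $1/r_1^2$, $1/r_2^2$ giving $|\sffM_{T^3}|^2=1+1/r_1^2+1/r_2^2$, flatness to kill the scalar curvature term, and then a direct application of Theorem~\ref{closed cmc case} with $d=6$. The only addition beyond the paper is your closing remark upgrading the topological sphere to a round one via lifting to the universal cover and Hopf's theorem, which is a valid (if unneeded) refinement.
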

\begin{proof}
Consider the canonical isometric embedding of $T^3$ into $\real^6$.
Using the usual normal frame the corresponding shape operators satisfy $|S_1|^2=1$, $|S_2|^2=\frac{1}{r_1^2}$ and $|S_3|^2=\frac{1}{r_2^2}$.
Thus,
\begin{equation*}
|\sffM_M|^2=1+\frac{1}{r_1^2}+\frac{1}{r_2^2}.
\end{equation*}
Since the ambient space is flat, the statement follows from Theorem \ref{closed cmc case}.
\end{proof}

\subsection*{Closed CMC surfaces in Berger Spheres $S_b^3(\kappa,\tau)$}
\hfill

Let $S^3\subset \complex^2$ be the unit sphere.
We denote by $S_b^3(\kappa,\tau)$ the Berger Sphere with metric $g_{\kappa,\tau}(X,Y)=\frac{4}{\kappa}\left(g(X,Y+\left(\frac{4\tau^2}{\kappa}-1\right)g(X,V)g(Y,V))\right)$, where $g$ is the Euclidean metric and $V=(iz,iw)$ is a Killing vector field on $S^3_b(\kappa,\tau)$.

Following Torralbo-Urbano \cite[Section 2.1]{torralbo-urbano2012}, as long as $\kappa-4\tau^2>0$ the authors construct an isometric embedding of $S^3_b(\kappa,\tau)$ into the complex projective space $\complex P^2(\kappa-4\tau^2)$ as a geodesic sphere. 
The space $\complex P^2(\kappa-4\tau^2)$ is the usual complex projective space endowed with the a dilation of the Fubini-Study metric $\frac{4}{\kappa-4\tau^2}g_{FS}$ with constant holomorphic sectional curvature $\kappa-4\tau^2$.

They then follow to embed $\complex P^2(\kappa-4\tau^2)$ into $\real^8$ and obtain a stability criterion similar to our index estimates in the case when the CMC surface is stable (see \cite[Lemma 6.4]{torralbo-urbano2012}).
By using the calculations already carried out in their paper we are able to improve their stability criterion in order to obtain the following index estimates.

\begin{theorem}
Let $S^3_b(\kappa,\tau)$ be a Berger Sphere with $\kappa-4\tau^2>0$.
If $\Sigma$ is a closed two-sided immersed CMC surface of genus $g$ in $S^3_b(\kappa,\tau)$ with mean curvature $H_\Sigma^2>\tau^2\left(\frac{\kappa}{4\tau^2}-3\right)\left(\frac{\kappa}{4\tau^2}+1\right)$, then
\begin{equation*}
\Index(\Sigma)\geq \frac{g}{8}.
\end{equation*}
In particular, if $\frac{\kappa}{4\tau^2}\in(1,3)$ then the index estimate is valid for all values of $H_\Sigma$.
\end{theorem}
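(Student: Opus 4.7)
The plan is to apply Corollary \ref{index estimates} to the isometric embedding of $S^3_b(\kappa,\tau)$ into $\real^8$ already constructed in \cite{torralbo-urbano2012}. The assumption $\kappa - 4\tau^2 > 0$ lets us realize the Berger sphere as a geodesic sphere in $\complex P^2(\kappa - 4\tau^2)$, and the latter is embedded into $\real^8$ by the standard ``traceless Hermitian projector'' map. The composition gives an isometric embedding into $\real^d$ with $d = 8$, which is exactly the denominator appearing in the conclusion.

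The heart of the argument is verifying the pointwise inequality of Corollary \ref{index estimates}. Whenever $\xi \neq 0$ we take $e_1 = \xi/|\xi|$, $e_2 = \star\xi/|\xi|$ as an orthonormal frame on $\Sigma$. Since $|\xi| = |\star\xi|$, the same identity used in Theorem \ref{closed cmc case} gives the pointwise equality
\[
\sum_{i=1}^2\bigl(|\sffM_M(e_i,\xi)|^2 + |\sffM_M(e_i,\star\xi)|^2\bigr) = |\xi|^2 \sum_{i,j=1}^2 |\sffM_M(e_i,e_j)|^2.
\]
The Berger sphere has constant scalar curvature $\scalar_M = 2\kappa - 2\tau^2$, obtained from its two sectional curvatures (namely $\kappa - 3\tau^2$ on horizontal planes and $\tau^2$ on planes containing the Killing direction). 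Feeding in the explicit second fundamental form of the Torralbo-Urbano embedding, one checks the pointwise estimate
\[
\sum_{i,j=1}^2 |\sffM_M(e_i,e_j)|^2 - \scalar_M \;\leq\; \tau^2\left(\tfrac{\kappa}{4\tau^2} - 3\right)\left(\tfrac{\kappa}{4\tau^2} + 1\right),
\]
uniformly over points of $\Sigma$ and choices of oriented 2-plane $\{e_1,e_2\} \subset T_pM$.

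With this bound in hand, the hypothesis $H_\Sigma^2 > \tau^2\bigl(\tfrac{\kappa}{4\tau^2} - 3\bigr)\bigl(\tfrac{\kappa}{4\tau^2} + 1\bigr)$ immediately produces the strict integral inequality required by Corollary \ref{index estimates}, yielding $\Index(\Sigma) \geq g/8$. For the ``in particular'' assertion, observe that when $\kappa/(4\tau^2) \in (1,3)$ the factor $(\tfrac{\kappa}{4\tau^2} - 3)$ is strictly negative while $(\tfrac{\kappa}{4\tau^2} + 1)$ is positive, so the upper bound on $H_\Sigma^2$ is negative and the hypothesis is automatic.

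The main obstacle is precisely the displayed pointwise inequality, since it requires controlling the quadratic form $\sum_{i,j=1}^2 |\sffM_M(e_i,e_j)|^2$ restricted to arbitrary 2-planes in $T_pS^3_b(\kappa,\tau)$ for the composite embedding into $\real^8$. Fortunately most of this bookkeeping already appears in \cite[Lemma 6.4]{torralbo-urbano2012}, where the second fundamental form is computed in terms of the O'Neill data of the Hopf fibration and then maximized over 2-planes; our contribution is simply to reassemble that computation into the format demanded by Corollary \ref{index estimates} rather than the stability criterion those authors used.
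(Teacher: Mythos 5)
Your proposal is correct and follows essentially the same route as the paper: the same composite embedding $S^3_b(\kappa,\tau)\subset\complex P^2(\kappa-4\tau^2)\subset\real^8$, the same reduction to the pointwise bound on $\sum_{i,j=1}^2|\sffM_M(e_i,e_j)|^2-\scalar_M$ via the frame $e_1=\xi/|\xi|$, $e_2=\star\xi/|\xi|$, and the same appeal to Torralbo--Urbano's computation of the second fundamental form (the paper quotes the explicit formula $\sum_{i,j}|\sffM_{S^3_b}(e_i,e_j)|^2=-6\tau^2+2\kappa+\tau^2(\tfrac{\kappa}{4\tau^2}-1)^2(1-C^2)^2$ from the proof of their Theorem 6.5, Case (4), which after subtracting $\scalar_{S^3_b}=2(\kappa-\tau^2)$ and maximizing over $C^2\in[0,1]$ factors into exactly your stated bound). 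The only differences are cosmetic: you cite their Lemma 6.4 rather than the proof of Theorem 6.5, and you leave the explicit formula implicit, but the algebra and the logic coincide with the paper's proof.
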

\begin{proof}
For short we write $S^3_b=S^3_b(\kappa,\tau)$.
Consider the isometric embeddings $S^3_b\subset\complex P^2(\kappa-4\tau^2)\subset \real^8$ constructed in \cite[Section 2.1]{torralbo-urbano2012} and denote by $\sffM_{S^3_b}$ second fundamental form of $S^3_b$ in $\real^8$.
In the proof of \cite[Theorem 6.5:Case(4)]{torralbo-urbano2012} Torralbo-Urbano compute the following:
\begin{equation*}
\sum_{i,j=1}^2|\sffM_{S^3_b}(e_i,e_j)|^2= -6\tau^2+2\kappa + \tau^2\left(\frac{\kappa}{4\tau^2}-1\right)^2(1-C^2)^2,
\end{equation*}
where $e_1,e_2$ is an orthonormal frame on $\Sigma$ and $C=\frac{\kappa}{4\tau}g_{\kappa,\tau}(N,V)$ satisfies $0\leq C^2\leq 1$.

Since $\scalar_{S^3_b}=2(\kappa-\tau^2)$, it follows from the proof of Theorem \ref{closed cmc case} that the index estimate is valid as long as $H_\Sigma^2>\tau^2\left(\frac{\kappa}{4\tau^2}-1\right)^2-4\tau^2$, which concludes the proof.
\end{proof}

\subsection*{Closed CMC surfaces in pinched manifolds.}
\hfill

Similar to \cite{ambrozio-carlotto-sharp2018.1} we would like to mention that the following examples remain valid in the CMC case.

\begin{theorem}
Let $M$ be an immersed manifold in $\real^d$ with $\scalar_M>C|\vec{H}_M|^2$ for some $C>0$.
If $\Sigma$ is a closed two-sided immersed CMC surface of genus $g$ in $M$ with mean curvature $H^2_\Sigma\geq|\vec{H}_M|^2(1-2C)$, then
\begin{equation*}
\Index(\Sigma)\geq \frac{g}{d}.
\end{equation*}
In particular, if $\scalar_M>\frac{1}{2}|\vec{H}_M|^2$ then any CMC surface satisfy the index estimate above.
\end{theorem}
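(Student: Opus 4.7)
The plan is to verify the integral hypothesis of Corollary \ref{index estimates} for every nonzero $\xi \in \mathcal{H}^1(\Sigma)$ and then quote the conclusion. The argument is a direct variant of the proof of Theorem \ref{closed cmc case}: instead of controlling $\sum_{i,j=1,2}|\sffM_M(e_i,e_j)|^2$ by the global supremum of $|\sffM_M|^2$, the idea is to use Gauss' equation for the isometric immersion $M \hookrightarrow \real^d$ to rewrite $|\sffM_M|^2 = |\vec{H}_M|^2 - \scalar_M$ pointwise. This is precisely what converts a pinching hypothesis on $\scalar_M/|\vec{H}_M|^2$ into a sharp bound on the integrand appearing in Corollary \ref{index estimates}.

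Concretely, fix nonzero $\xi \in \mathcal{H}^1(\Sigma)$ and, at each point where $\xi \neq 0$, choose the orthonormal frame $e_1 = \xi/|\xi|$, $e_2 = \star\xi/|\star\xi|$, exactly as in equation (\ref{keystep1}). This yields the pointwise identity
\[\sum_{i=1}^2 |\sffM_M(e_i,\xi)|^2 + |\sffM_M(e_i,\star\xi)|^2 = |\xi|^2 \sum_{i,j=1,2} |\sffM_M(e_i,e_j)|^2.\]
Completing $\{e_1,e_2\}$ to an orthonormal basis of $T_pM$ bounds the inner sum above by $|\sffM_M|^2$, and Gauss' equation for $M \subset \real^d$ identifies $|\sffM_M|^2 = |\vec{H}_M|^2 - \scalar_M$ at every point; the resulting inequality extends trivially to the zeros of $\xi$. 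Integrating and rearranging,
\[\int_\Sigma \sum_{i=1}^2 \left(|\sffM_M(e_i,\xi)|^2 + |\sffM_M(e_i,\star\xi)|^2\right) - \scalar_M|\xi|^2 \, dV_\Sigma \leq \int_\Sigma (|\vec{H}_M|^2 - 2\scalar_M)|\xi|^2 \, dV_\Sigma.\]

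To finish, the strict pinching $\scalar_M > C|\vec{H}_M|^2$ combined with the hypothesis $H_\Sigma^2 \geq (1-2C)|\vec{H}_M|^2$ gives the strict pointwise inequality $|\vec{H}_M|^2 - 2\scalar_M < H_\Sigma^2$, and since $\xi \not\equiv 0$ is necessarily nonzero on a nonempty open set, this pointwise inequality integrates to a strict integral inequality. This verifies the hypothesis of Corollary \ref{index estimates}, yielding $\Index(\Sigma) \geq g/d$. For the ``in particular'' clause one takes $C = 1/2$: the condition $H_\Sigma^2 \geq 0$ is automatic, so every CMC $\Sigma$ qualifies. There is no substantive obstacle here; the only small point requiring attention is the derivation of the flat-ambient Gauss identity $|\sffM_M|^2 = |\vec{H}_M|^2 - \scalar_M$ for an $n$-dimensional submanifold of $\real^d$, which follows from summing the sectional-curvature form of Gauss' equation over all pairs of frame indices.
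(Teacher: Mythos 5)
Your proof is correct and follows essentially the same route as the paper: both hinge on Gauss' equation for $M\subset\real^d$ giving $|\sffM_M|^2=|\vec{H}_M|^2-\scalar_M$, which converts the pinching hypothesis into the hypothesis of Corollary \ref{index estimates} via the frame $e_1=\xi/|\xi|$, $e_2=\star\xi/|\star\xi|$. The only difference is cosmetic: the paper routes the estimate through Theorem \ref{closed cmc case} using $\sup|\sffM_M|^2-\inf\scalar_M$, whereas you keep everything pointwise and invoke Corollary \ref{index estimates} directly, which is if anything a slightly cleaner treatment of the non-constant quantities $|\vec{H}_M|^2$ and $\scalar_M$.
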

\begin{proof}
Observe that Gauss' equation on $M$ implies that $|\sffM_M|^2=|\vec{H}_M|^2-\scalar_M$.
As a consequence, we have 
\begin{equation*}
\sup|\sffM_M|^2-\inf \scalar_M<|\vec{H}_M|^2(1-2C).
\end{equation*}
The result follows from Theorem \ref{closed cmc case}.
\end{proof}

In the case of closed hypersurfaces of $\real^4$ we have the following weaker pinching result.
Compare to \cite[Remark 5.3]{ambrozio-carlotto-sharp2018.1}.

\begin{theorem}
Let $M$ be an embedded convex two-sided hypersurface of $\real^4$ with principal curvatures $0\geq k_1\geq k_2\geq k_3$ with respect to the outward normal vector field.
Suppose $M$ satisfies a pinching condition $\frac{k_3}{k_1}<C$, for some $C>0$.
If $\Sigma$ is a closed two-sided immersed CMC surface of genus $g$ in $M$ with mean curvature $H^2_\Sigma\geq 3k_1^2(C^2-2)$, then
\begin{equation*}
\Index(\Sigma)\geq \frac{g}{4}.
\end{equation*}
In particular, if $\frac{k_3}{k_1}<\sqrt{2}$ then any CMC surface satisfy the index estimate above.
\end{theorem}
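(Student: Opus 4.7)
The plan is to invoke Theorem~\ref{closed cmc case} with the ambient embedding $M\hookrightarrow\real^4$, reducing the problem to the pointwise estimate
\[
\sup_M|\sffM_M|^2 - \inf_M\scalar_M \leq 3k_1^2(C^2-2),
\]
which I will derive purely from the pinching hypothesis. Since $M$ is a hypersurface of $\real^4$, Gauss' equation gives
\[
\scalar_M = |\vec H_M|^2 - |\sffM_M|^2 = 2(k_1k_2+k_1k_3+k_2k_3),
\]
so pointwise $|\sffM_M|^2 - \scalar_M = \sum_i k_i^2 - 2\sum_{i<j}k_ik_j$, and it suffices to control each summand in terms of $k_1^2$.

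The ordering $0\geq k_1\geq k_2\geq k_3$ translates into $|k_1|\leq|k_2|\leq|k_3|$, so the pinching $k_3/k_1<C$ (a ratio of two quantities of the same sign) forces $|k_i|\leq C|k_1|$ for each $i=1,2,3$; in particular this implicitly requires $C\geq 1$. Hence $|\sffM_M|^2=\sum_i k_i^2\leq 3C^2 k_1^2$. On the other hand, since all three principal curvatures share a sign, every product satisfies $k_ik_j=|k_i||k_j|\geq k_1^2$, whence $\scalar_M\geq 6k_1^2$. Subtracting gives $|\sffM_M|^2-\scalar_M \leq 3k_1^2(C^2-2)$ at every point of $M$, which is the desired bound.

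Under the hypothesis $H_\Sigma^2\geq 3k_1^2(C^2-2)$, interpreted as the appropriate supremum over $M$, the condition of Theorem~\ref{closed cmc case} is therefore satisfied, up to a routine approximation to replace $\geq$ by $>$, and the index estimate $\Index(\Sigma)\geq g/4$ follows with $d=4$. The final clause is automatic: if $C<\sqrt{2}$ then $3k_1^2(C^2-2)\leq 0 \leq H_\Sigma^2$, so the mean curvature hypothesis is vacuously satisfied by every closed two-sided immersed CMC surface in $M$.

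The only delicate points I foresee are bookkeeping with the sign convention for the outward normal (the whole argument really only uses that the three principal curvatures have a common sign) and the harmless upgrade from the non-strict inequality in the statement to the strict one in Theorem~\ref{closed cmc case}; neither appears to be a genuine obstacle, so the substance of the proof is the two elementary pointwise inequalities above.
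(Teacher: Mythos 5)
Your proposal is correct and follows essentially the same route as the paper's own proof: bound $|\sffM_M|^2=k_1^2+k_2^2+k_3^2$ above via the pinching (the paper writes $|\sffM_M|^2\leq 3k_3^2<3C^2k_1^2$), bound $\scalar_M=2(k_1k_2+k_1k_3+k_2k_3)\geq 6k_1^2$ below using that the principal curvatures share a sign, and feed the difference into Theorem \ref{closed cmc case}. The only small refinement: no ``routine approximation'' is needed to upgrade $\geq$ to $>$, since the strict pinching $k_3/k_1<C$ already makes your pointwise bound strict, $|\sffM_M|^2-\scalar_M<3k_1^2(C^2-2)$, which is exactly how the paper reconciles the non-strict hypothesis on $H_\Sigma^2$ with the strict inequality required by Theorem \ref{closed cmc case}.
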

\begin{proof}
Firstly observe that $|\sffM_M|^2=k_1^2+k_2^2+k_3^2\leq 3k_3^2$.
Secondly, it follows from Gauss' equation that $\scalar_M=2(k_1k_2+k_1k_3+k_2k_3)\geq 6k_1^2$.
That is,
\begin{equation*}
\sup |\sffM_M|^2-\inf \scalar_M\leq 3k_1^2\left(\frac{k_3^2}{k_1^2}-2\right)<3k_1^2(C^2-2).
\end{equation*}
The result follows from Theorem \ref{closed cmc case}.
\end{proof}

\subsection*{Free boundary CMC surfaces}
\hfill

To avoid repetition, let us mention that a similar index estimate is valid for free boundary CMC surfaces in mean convex domain of any of the above examples.
More especifically, let $\Sigma$ be a free boundary CMC surface of genus $g$ and $r$ boundary components in a domain with mean convex boundary, with respect to the inward normal direction, in any of the examples above.
Suppose $\Sigma$ satisfy the same mean curvature condition as the closed case in its respective ambient manifold.
Then $\Index(\Sigma)\geq\frac{2g+r-1-d}{2d}$ where $d$ is the dimension of the Euclidean space where the respective ambient manifold is embedded.

Let us highlight the special case of mean convex domains in $\real^3$.
The proof follows directly from Corollary \ref{index estimate free boundary}.
This particular case was first proved by Cavalcante-de Oliveira \cite{cavalcante-oliveira2018:arxiv}.

\begin{theorem}[{\cite[Theorem 1.1]{cavalcante-oliveira2018:arxiv}}]
Let $(M,\partial M)$ be a mean convex, with respect to the inward normal vector, domain of $\real^3$.
If $\Sigma$ is a free boundary CMC surface of $M$ of genus $g$ and $r$ boundary components, then
\begin{equation*}
\Index(\Sigma)\geq\frac{2g+r-4}{6}.
\end{equation*}
\end{theorem}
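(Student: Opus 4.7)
The plan is to derive this as a direct consequence of Corollary \ref{index estimate free boundary} by setting $d=3$ and observing that the ambient geometric data collapses almost entirely in this flat setting. Since $M$ is a domain of $\real^3$, the isometric embedding is the identity inclusion, so the Euclidean second fundamental form $\sffM_M$ vanishes identically, and the scalar curvature $\scalar_M$ is zero. The only piece of ambient geometry that survives is the boundary term $H_{\partial M}$, which by assumption is non-negative with respect to the inward normal $\nu$.

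First I would verify the integral hypothesis of Corollary \ref{index estimate free boundary} pointwise. For any tangential harmonic vector field $\xi\in\harmonicvfb(\Sigma,\partial\Sigma)$, the bulk integrand $\sum_{i=1}^2\bigl(|\sffM_M(e_i,\xi)|^2+|\sffM_M(e_i,\star\xi)|^2\bigr)-\scalar_M|\xi|^2$ is identically $0$. Thus the entire left-hand side of the inequality in the corollary reduces to
\begin{equation*}
-2\int_{\partial\Sigma}H_{\partial M}|\xi|^2\,dV_{\partial\Sigma},
\end{equation*}
which is non-positive by the mean convexity hypothesis. Since $\Sigma$ is CMC with $H_\Sigma$ constant and (assumed) non-zero, the right-hand side $\int_\Sigma H_\Sigma^2|\xi|^2\,dV_\Sigma$ is strictly positive whenever $\xi\not\equiv 0$. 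Hence the strict inequality required in the corollary is satisfied for every non-zero $\xi\in\harmonicvfb(\Sigma,\partial\Sigma)$.

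Applying Corollary \ref{index estimate free boundary} with $d=3$ then yields
\begin{equation*}
\Index(\Sigma)\geq\frac{2g+r-1-3}{2\cdot 3}=\frac{2g+r-4}{6},
\end{equation*}
as claimed. The main subtlety, rather than an obstacle, is the $H_\Sigma\neq 0$ condition needed to give the strict sign: the minimal case is covered by the free boundary minimal surface index estimate of Ambrozio-Carlotto-Sharp \cite{ambrozio-carlotto-sharp2018.2}, and if a limiting argument is preferred, one may instead note that $H_\Sigma=0$ is a non-generic degenerate case for which the inequality is trivial since the space of tangential harmonic vector fields has dimension $2g+r-1$, so the estimate $\frac{2g+r-4}{6}$ can become negative and is vacuously true for low-topology surfaces.
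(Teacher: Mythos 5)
Your core reduction is exactly the paper's proof: the paper obtains this theorem as a direct application of Corollary \ref{index estimate free boundary} with $d=3$, using that a domain of $\real^3$ is totally geodesic and flat under the identity inclusion (so $\sffM_M\equiv 0$ and $\scalar_M\equiv 0$), that mean convexity makes the boundary term non-positive, and that $(2g+r-1-3)/(2\cdot 3)=(2g+r-4)/6$. For $H_\Sigma\neq 0$ your write-up is correct and identical in approach.

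Your handling of the case $H_\Sigma=0$, however, does not work, and this matters because the theorem as stated includes minimal surfaces (for which the strict inequality in the corollary may fail, since both sides can vanish). First, the Ambrozio--Carlotto--Sharp estimate in \cite{ambrozio-carlotto-sharp2018.2} bounds the \emph{unconstrained} Morse index of a free boundary minimal surface, whereas $\Index(\Sigma)$ in this theorem is the volume-constrained (CMC) index, i.e.\ the number of negative eigenvalues of the twisted problem; the only general relation is that the constrained index can be smaller than the unconstrained one by $1$, so quoting their bound loses an additive $1$ and does not recover the constant $(2g+r-4)/6$. Second, the ``vacuously true'' fallback only covers $2g+r\leq 4$; it says nothing about minimal surfaces of large genus or with many boundary components, and ``non-generic'' is not an argument---there is no limiting family of CMC surfaces attached to a given minimal one along which index estimates could be passed to the limit. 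A clean repair when $H_{\partial M}>0$ strictly: a non-zero harmonic $1$-form on a surface has isolated zeros, so $|\xi|^2$ cannot vanish identically on $\partial\Sigma$, hence $\int_{\partial\Sigma}H_{\partial M}|\xi|^2\,dV_{\partial\Sigma}>0$ and the strict inequality in Corollary \ref{index estimate free boundary} holds even with $H_\Sigma=0$. If only $H_{\partial M}\geq 0$ is assumed, the hypothesis of the corollary can genuinely fail for minimal $\Sigma$; note that the paper itself presents this theorem as a special case of the preceding general statement, which carries the mean curvature condition $H_\Sigma^2>\sup|\sffM_M|^2-\inf\scalar_M=0$, i.e.\ $H_\Sigma\neq 0$, so the honest reading is that either this hypothesis is implicit or the boundary convexity must be strict.
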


\bibliographystyle{plain}
\bibliography{bibliography}

\end{document}